\documentclass[a4paper,12pt]{article} 
\usepackage{natbib}
\usepackage[utf8]{inputenc}
\usepackage[T1]{fontenc}
\usepackage{lmodern}
\usepackage{amsmath} 
\usepackage{amsthm}
\usepackage{amssymb}
\usepackage{mathtools}
\usepackage{overpic}
\usepackage{fancyhdr}
\usepackage{graphicx} 
\usepackage[left=1in,top=1in,bottom=1.5in,right=1in]{geometry}

\pagestyle{fancy}

\renewcommand{\subsectionmark}[1]{} 
\fancyhead{}
\lhead{\nouppercase{\rightmark}} 
\rhead{\textup{\thepage}}
\fancyfoot{}

\newcommand{\eps}{\varepsilon}

\newcommand{\T}{\mathbb{T}}

\newcommand*{\C}{{\mathbb{C}}}     
\newcommand*{\R}{{\mathbb{R}}}     
     
\newcommand*{\N}{{\mathbb{N}}}

\newcommand*{\Lin}{{\mathcal{L}}}   
\newcommand*{\Dom}{{\mathcal{D}}}   
\newcommand{\ran}{{\mathcal{R}}}   
\renewcommand{\ker}{{\mathcal{N}}}

\newcommand*{\abs} [1]{\lvert#1\rvert}
\newcommand*{\norm}[1]{\lVert#1\rVert}
\newcommand*{\set} [1]{\{#1\}}
\newcommand*{\setm}[2]{\{\,#1\mid#2\,\}}   
\newcommand*{\iprod}[2]{\langle#1,#2\rangle}    
\newcommand*{\Setm}[2]{\left\{\,#1\,\middle|\,#2\,\right\}}

\newcommand*{\Abs}[2][default]{\ifthenelse{\equal{#1}{default}}{\left\lvert#2\right\rvert}{\ldelim{#1}{\lvert}#2\rdelim{#1}{\rvert}}}
\newcommand*{\Norm}[2][default]{\ifthenelse{\equal{#1}{default}}{\left\lVert#2\right\rVert}{\ldelim{#1}{\lVert}#2\rdelim{#1}{\rVert}}}

\newcommand*{\Iprod}[3][default]{\ifthenelse{\equal{#1}{default}}{\left\langle#2,#3\right\rangle}{\ldelim{#1}{\langle}#2,#3\rdelim{#1}{\rangle}}}
\newcommand*{\Dualpair}[3][default]{\ifthenelse{\equal{#1}{default}}{\left\langle#2,#3\right\rangle}{\ldelim{#1}{\langle}#2,#3\rdelim{#1}{\rangle}}}

\newcommand*{\List}[2][1]{\set{#1,\ldots,#2}}

\newcommand{\eq}[1]{\begin{align*}#1\end{align*}}
\newcommand{\eqn}[1]{\begin{align}#1\end{align}}

\newcommand{\Igw}{\List{N}} 

\newcommand{\Y}{\C^p}
\newcommand{\D}{\mathbb{D}}
\newcommand{\gs}{\sigma}
\newcommand{\ga}{\alpha}
\newcommand{\gb}{\beta}
\renewcommand{\gg}{\gamma}
\newcommand{\gd}{\delta}
\newcommand{\gl}{\lambda}

\newcommand{\ieq}[1]{$#1$}

\newcommand{\inv}{^{-1}}
\newcommand*{\floor}[1]{\lfloor#1\rfloor}

\newcommand{\Asg}[1][A]{({#1}^n)_{n\in\N}}

\newcommand{\ep}[1][k]{e^{i\varphi_{#1}}}
\newcommand{\epc}[1][k]{e^{-i\varphi_{#1}}}

\newcommand{\BA}{\Lambda}

\newtheorem{theorem}{Theorem}
\newtheorem{lemma}[theorem]{Lemma}
\newtheorem{corollary}[theorem]{Corollary}

\newtheorem{assumption}[theorem]{Assumption}

\theoremstyle{definition}

\bibpunct{[}{]}{,}{n}{}{,}

\title{Robustness of Strong Stability of Discrete Semigroups}
\author{Lassi Paunonen\thanks{Tampere University of Technology, PO.Box 553, 33101 Tampere, Finland, \texttt{lassi.paunonen@tut.fi}}}
\date{~}

\begin{document}

\maketitle
\vspace{-8ex}

\thispagestyle{plain}

\begin{abstract}
In this paper we study the robustness of strong stability of a discrete semigroup on a Hilbert space under bounded finite rank perturbations. As the main result we characterize classes of perturbations preserving the strong stability of the semigroup. 
\end{abstract}

\section{Introduction}

Due to the high level of generality and the many forms of strong stability, finding conditions for preservation of strong stability of a semigroup under perturbations of its generator is a challenging research problem.  However, recent advances in the theory of nonuniform stability of semigroups~\cite{BatEng06,BatDuy08,BorTom10,BatChi14} have made it possible to study robustness of stability for semigroups that are not exponentially stable~\cite{Pau12,Pau13c-a}. While general strongly stable semigroups may have no intrinsic robustness properties, the theory of nonuniform stability of semigroups opens doors for research on robustness properties for many important subclasses of strongly stable semigroups.

In this short paper we consider the preservation of strong stability of discrete semigroups $\Asg$ with $A\in \Lin(X)$ under additive finite rank perturbations $A+BC$ with $B\in \Lin(\Y,X)$ and $C\in \Lin(X,\Y)$. In particular, we assume that the unperturbed semigroup $\Asg$ is strongly stable in such a way that $A$ has a finite number of spectral points on the unit circle $\T$, and the growth of its resolvent operator is polynomially bounded near these points.  

The main result of this paper is a discrete analogue of the set of conditions for preservation of strong stability of strongly continuous semigroups presented in~\cite{Pau13c-a}. The techniques employed here are similar to those used in~\cite{Pau13c-a}, but in many situations the proofs can be greatly simplified due to the fact that the operator $A$ is bounded. The discrete proofs also require several modifications, mainly in estimating the behaviour of the resolvent operator near the unit disk $\D$. To the author's knowledge, the preservation of strong stability of discrete semigroups has not been studied previously in the literature. Moreover, the resolvent estimates presented in this paper generalize the results found in the literature by allowing $A$ to have multiple spectral points on $\T$.

Assumption~\ref{ass:Astandass} below states the standing assumptions on the semigroup $\Asg$ and on the perturbations. The strong stability of $\Asg$ implies that $\gs_p(A)\cap \T=\varnothing$. Since $X$ is a Hilbert space, Theorem 2.9 and Corollary 2.11 in~\cite{Eis10book} imply that for all $\varphi\in[0,2\pi]$ 
\eq{
X = \ker(A-\ep[]) \oplus \overline{\ran(A-\ep[])} = \overline{\ran(A-\ep[])}.
}
Therefore, all spectral points of $A$ on the unit circle belong to $\gs_c(A)$.

\begin{assumption} 
  \label{ass:Astandass} 
  Let $X$ be a Hilbert space. Assume that the operators $A \in \Lin(X)$, $B\in \Lin(\Y,X)$, and $C \in \Lin(X,\Y)$ satisfy the following for some $\ga\geq 1$, and $\gb,\gg\geq 0$.
  \begin{itemize}
    \item[\textup{1.}]
      The discrete semigroup $\Asg$ is strongly stable, $\gs(A)\cap \T = \set{\ep}_{k=1}^N$ for some $N\in\N$ and $d_A=\min_{k\neq l}\abs{\varphi_k-\varphi_l}>0$.
      Moreover, there exist constants $M_A\geq 1$ and  $0<\eps_A\leq \min \set{\pi/8,d_A/3}$ such that
      \eqn{
        \label{eq:Aresolventgrowthorder}
	\sup_{0<\abs{\varphi-\varphi_k}\leq \eps_A} \abs{\varphi-\varphi_k}^\ga \norm{R(\ep[],A)}\leq M_A,
      }
      for all $k\in\Igw$
      and $\norm{R(\ep[],A)}\leq M_A$ whenever $\abs{\varphi-\varphi_k}>\eps_A$ for all $k$. 
    \item[\textup{2.}] 
      $\ran(B)\subset \ran( (1- \epc A)^\gb)$ and $\ran(C^\ast)\subset \ran( (1-\ep A^\ast)^\gg)$ for every $k\in\Igw$.
  \end{itemize}
\end{assumption}

The second part of Assumption~\ref{ass:Astandass} together with the Closed Graph Theorem implies $(1-\epc A)^{-\gb} B\in \Lin(\Y,X)$ and $(1-\ep A^\ast)^{-\gg} C^\ast \in \Lin(X,\Y)$.

The following theorem presenting conditions for preservation of the stability of the semigroup $\Asg$ is the main result of this paper. 

\begin{theorem}
  \label{thm:stabpert}
  Let Assumption~\textup{\ref{ass:Astandass}} be satisfied with $\gb+\gg\geq \ga$. There exists $\gd>0$ such that if
  \eq{
  \norm{(1-e^{-i\varphi_k} A)^{-\gb} B}<\gd, \quad \mbox{and} \quad  \norm{(1-e^{i\varphi_k} A^\ast)^{-\gg}C^\ast}<\gd
  }
  for all $k\in \Igw$, then the discrete semigroup $\Asg[(A+BC)]$ is strongly stable. 
  Moreover, we then have $\gs(A+BC)\cap \T = \gs_c(A+BC)\cap \T = \set{\ep}_{k=1}^N$, and for all $k$
  \eq{
  \sup_{0<\abs{\varphi-\varphi_k}\leq\eps_A}\abs{\varphi-\varphi_k}^\ga\norm{R(\ep[],A+BC)} <\infty.
  }
\end{theorem}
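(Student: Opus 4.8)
The heart of the matter is a resolvent‐perturbation argument. The plan is to write, for $z$ in a neighbourhood of $\T$ minus the finitely many points $\ep$, the standard identity
\[
R(z,A+BC) = R(z,A)\bigl(I - BC\,R(z,A)\bigr)^{-1},
\]
provided the operator $I - CR(z,A)B$ on the finite-dimensional space $\Y = \C^p$ is invertible (a Schur-complement/Sherman–Morrison reduction to $\C^p$, which is legitimate because $BC$ has finite rank). So everything reduces to two things: (i) showing $I - CR(z,A)B$ is invertible with controlled inverse on the relevant region, and (ii) bounding $\norm{R(z,A+BC)}$ from the resulting formula, in particular recovering the polynomial growth rate $\abs{\varphi-\varphi_k}^{-\ga}$ near each $\ep$.

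**Key steps, in order.**

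First I would establish the crucial resolvent estimate for the unperturbed operator near a spectral point: using the factorization $C R(z,A) B = \bigl[(1-\epc A^\ast)^{-\gg}C^\ast\bigr]^\ast (1-\epc A)^{\gg}R(z,A)(1-\epc A)^{\gb}\bigl[(1-\epc A)^{-\gb}B\bigr]$ (with the appropriate adjoint placement), and the hypothesis $\gb+\gg \ge \ga$ together with \eqref{eq:Aresolventgrowthorder}, I would show that the "middle" factor $(1-\epc A)^{\gg}R(z,A)(1-\epc A)^{\gb}$ stays \emph{bounded} as $z \to \ep$ along, say, $z = re^{i\varphi}$ with $r\le 1$ or $r$ close to $1$; this is because the $\abs{\varphi - \varphi_k}^{\gb+\gg}$ gained from the two polynomial factors kills the $\abs{\varphi-\varphi_k}^{-\ga}$ blow-up of $R$. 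This step is where the discrete analogue needs care — unlike the continuous case one must control behaviour both inside $\D$ (where $A+BC$ may a priori have spectrum) and on $\T$, so I would use functional calculus for the bounded operator $A$ to pass from the estimate at $\ep$ on $\T$ to a uniform estimate on a small two-sided neighbourhood of $\ep$ in $\overline{\D}\cup(\text{slightly outside})$, shrinking $\eps_A$ if necessary. Away from the points $\ep$, $R(z,A)$ is uniformly bounded by $M_A$ by Assumption~\ref{ass:Astandass}, so $CR(z,A)B$ is small in operator norm once $\norm{(1-\epc A)^{-\gb}B}$ and $\norm{(1-\ep A^\ast)^{-\gg}C^\ast}$ are $<\gd$ with $\gd$ small — hence $\bigl(I - CR(z,A)B\bigr)^{-1}$ exists and is bounded by, say, $2$, uniformly on a neighbourhood of $\T\setminus\bigcup_k\{\ep\}$ and also, by the boundedness from the first step, on the punctured neighbourhoods of the $\ep$.

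**Assembling stability.**

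With invertibility of $I - CR(z,A)B$ in hand on a neighbourhood of $\overline{\D}\setminus\bigcup_k\{\ep\}$ (intersected with a slightly larger disk), the formula above shows $\gs(A+BC)$ meets $\overline{\D}$ only possibly at the $\ep$; combined with the fact that $\norm{A+BC}$ can be kept $\le 1 + O(\gd)$ — actually one needs $r(A+BC)\le 1$, which follows since no spectrum lies outside $\overline{\D}$ — one gets $\gs(A+BC)\cap\T \subseteq \{\ep\}_{k=1}^N$. That each $\ep$ \emph{is} in the spectrum, and lies in $\gs_c(A+BC)$, would follow from the same argument run for $A+BC$ in place of $A$: the perturbed operator satisfies the analogous resolvent growth bound (from plugging the middle-factor estimate back into the formula, yielding $\norm{R(z,A+BC)} \le \norm{R(z,A)}\cdot 2 \le 2M_A\abs{\varphi-\varphi_k}^{-\ga}$), and the Arendt–Batty–Lyubich–Vũ-type / Eisner criterion (the decomposition $X = \overline{\ran(A+BC-\ep[])}$ quoted in the excerpt, re-derived for the perturbed operator from $\gs_p = \varnothing$) forces these points into $\gs_c$. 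Finally, strong stability of $\Asg[(A+BC)]$ follows from a discrete Arendt–Batty–Lyubich–Vũ / Katznelson–Tzafriri–type theorem: $r(A+BC) \le 1$, the spectrum on $\T$ is countable (finite, here), and — crucially — the countable set $\{\ep\}_{k=1}^N$ consists of non-eigenvalues of $(A+BC)^\ast$ too, which again comes from the Hilbert-space orthogonal decomposition.

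**Main obstacle.**

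The main technical obstacle is step one: proving that the "sandwiched" resolvent $(1-\epc A)^{\gg}R(z,A)(1-\epc A)^{\gb}$ remains bounded as $z\to\ep$, uniformly over a genuinely two-dimensional neighbourhood of $\ep$ touching both sides of $\T$, rather than just along $\T$. One must interpolate the fractional powers against the polynomial resolvent bound \eqref{eq:Aresolventgrowthorder}; the clean way is a moment/interpolation inequality of the form $\norm{(1-\epc A)^{\theta}R(z,A)} \lesssim \norm{R(z,A)}^{1-\theta}$ for $\theta\in[0,1]$ (and its analogue for powers $>1$ by iterating), but making this rigorous for a Hilbert-space contraction-like operator with only a local resolvent bound, and verifying the constants are uniform as $z$ ranges over the neighbourhood (not merely on $\T$), is the delicate part and is presumably exactly where the "several modifications … in estimating the behaviour of the resolvent operator near the unit disk" mentioned in the introduction are needed.
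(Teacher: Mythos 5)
Your overall skeleton (Sherman--Morrison--Woodbury reduction to $\C^p$, smallness of $CR(\gl,A)B$ via the factorization through fractional powers and the hypothesis $\gb+\gg\geq\ga$, then a discrete Arendt--Batty--Lyubich--V\~u argument) matches the paper's strategy, but there is a genuine gap at the decisive step: you never prove that the perturbed semigroup $((A+BC)^n)_{n\in\N}$ is \emph{power bounded}. The discrete ABLV theorem you invoke requires power boundedness in addition to countable peripheral spectrum and absence of peripheral point spectrum; spectral radius $\leq 1$ plus the resolvent bound $\norm{R(\gl,A+BC)}\lesssim\abs{\varphi-\varphi_k}^{-\ga}$ does not yield it (even a Kreiss-type resolvent condition fails to imply power boundedness in infinite dimensions, and here the resolvent actually blows up at the points $\ep$). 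The same omission undercuts your placement of the points $\ep$ in $\gs_c(A+BC)$: the Hilbert-space decomposition $X=\ker(T-\ep[])\oplus\overline{\ran(T-\ep[])}$ that you appeal to is a mean-ergodic-type fact valid for power-bounded $T$, so it too is unavailable until boundedness of the perturbed semigroup is established. This is precisely where the bulk of the paper's proof lives: it uses a Hilbert-space integral characterization of power boundedness (Theorem~\ref{thm:unifbddconds}, involving $\sup_{1<r\leq 2}(r-1)\int_0^{2\pi}\norm{R(r\ep[],A+BC)x}^2\,d\varphi$ and the adjoint analogue), and verifies it for $A+BC$ via the Woodbury formula, the uniform bound on $(1-CR(\gl,A)B)\inv$, the finite rank of $B$ and $C$ (Lemma~\ref{lem:BCfinrankint}), and a moment-inequality/H\"older splitting $\gb_1+\gg_1=\ga$ giving a square-integrable majorant $f_k$ of $\norm{R(\gl,A)B}\,\norm{CR(\gl,A)}$ near each $\ep$ (Lemma~\ref{lem:RBCRest}). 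Without an argument of this kind (or some substitute producing a bound on $\sup_n\norm{(A+BC)^n}$), the strong stability conclusion does not follow from what you have.

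Two smaller points. First, your worry about controlling the resolvent ``inside $\D$'' is misplaced: $R(z,A)$ need not exist in $\D$ at all (interior spectrum is allowed); the estimates are only needed on $\C\setminus\D$, where the boundary bound \eqref{eq:Aresolventgrowthorder} is propagated outward using the strong Kreiss condition coming from power boundedness of $A$ (this is the content of Lemma~\ref{lem:Okbdd} and Theorem~\ref{thm:RBAbdd}). Second, the claim that each $\ep$ remains in $\gs(A+BC)\setminus\gs_p(A+BC)$ needs its own argument; the paper gets it from the explicit factorization $\ep-A-BC=\ep\BA_k^{\gb_1}(1-\epc\BA_k^{-\gb_1}BC_{\gg_1})\BA_k^{\gg_1}$ with $\gb_1+\gg_1=1$ (injective but not surjective), rather than by ``running the same argument for $A+BC$,'' which as stated is circular.
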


We begin the paper by studying the behaviour of the resolvent operator $R(\gl,A)$ near the unit disk $\D$ in Section~\ref{sec:resest}. These results are required in the proof of Theorem~\ref{thm:stabpert}, which is presented subsequently in Section~\ref{sec:strongstabpres}.

If~$X$ and~$Y$ are Banach spaces and~$A:X\rightarrow Y$ is a linear operator, we denote by~$\Dom(A)$, $\ran(A)$, and $\ker(A)$ the domain, the range, and the kernel of~$A$, respectively. 
The space of bounded linear operators from~$X$ to~$Y$ is denoted by~$\Lin(X,Y)$. 
If \mbox{$A:\Dom(A)\subset X\rightarrow X$,} then~$\gs(A)$, $\gs_p(A)$, $\gs_c(A)$ and~$\rho(A)$ denote the spectrum, the point spectrum, the continuous spectrum and the \mbox{resolvent} set of~$A$, respectively. For~$\gl\in\rho(A)$ the resolvent operator is given by \mbox{$R(\gl,A)=(\gl -A)^{-1}$}. 
The inner product on a Hilbert space is denoted by $\iprod{\cdot}{\cdot}$.  
We denote $\T = \setm{z\in\C}{\abs{z}=1}$, $\D = \setm{z\in\C}{\abs{z}<1}$, $\overline{\D} = \setm{z\in\C}{\abs{z}\leq 1}$,

\section{Resolvent Estimates}
\label{sec:resest}

In this section we study the behaviour of the resolvent operator $R(\gl,A)$ near the unit disk $\D$. In particular, the proof of Theorem~\ref{thm:stabpert} is based on the property that the polynomial growth of the resolvent operator near the points $\ep$ can be cancelled by a suitable operator. The general form of the resolvent estimates follows the recent results for strongly continuous semigroups that have appeared in~\cite{BorTom10,LatShv01,BatChi14}, and the results in this section can be seen as straightforward discrete reformulations of corresponding results in the previous references. The main difference compared to the previous references is that we allow the operator $A$ to have multiple spectral points on the unit circle $\T$.

Define $\BA_k = 1-\epc A$ for $k\in\Igw$. The operators $\BA_k$ and $\BA_l$ commute for every $k,l\in \Igw$, we have $\BA_k^\ast = 1-\ep A^\ast$, and the families $(\BA_k)_{k=1}^N$ and $(\BA_k^\ast)_{k=1}^N$ are uniformly sectorial. Indeed, since the operator $A$ is power bounded, the strong Kreiss resolvent condition~\cite{Eis10book} implies 
\ieq{
\norm{R(\gl,\epc A)} \leq M/(\abs{\gl}-1)
}
for all $\gl\in\C\setminus \overline{\D}$,
where $M=\sup_{n\in\N} \norm{A^n} =\sup_{n\in\N} \norm{(\epc A)^n} $.
This implies that for every $\gl>0$ we have
\eq{
\norm{\gl (\gl+1-\epc A)\inv}
\leq \gl\frac{M}{\abs{\gl+1}-1}
=M.
}
Since the bound is independent of $\varphi_k \in [0,2\pi]$, by~\cite[Prop. 2.1.1]{Haa06book} the family $(\BA_k)_{k=1}^N$ is uniformly sectorial. Since $\gs_p(A)\cap \T=\varnothing$, the operators $\BA_k$ are injective and have sectorial inverses $\BA_k\inv: \ran(\BA_k)\subset X\rightarrow X$. The same conclusions are true for the operators $\BA_k^\ast = 1-\ep A^\ast$. The fractional powers $\BA_k^\gb$ and $(\BA_k^\ast)^\gg$ are therefore defined for all $\gb,\gg\in\R$.

Consider regions $\Omega_k\subset \C\setminus\D$ defined as
(see Figure~\ref{fig:Omegak}). 
\eq{
\Omega_k = \Setm{\gl\in \C}{\abs{\gl}\geq 1, ~0< \abs{\gl-\ep} \leq r_A},
}
where $r_A = \abs{1-e^{i\eps_A}}$. We have $0<r_A\leq 1$ and $\abs{e^{i\varphi_k}-e^{i(\varphi_k\pm\eps_A)}}=r_A$ for all $k$.

\begin{figure}[ht]
  \begin{center}
    \includegraphics[width=0.4\linewidth]{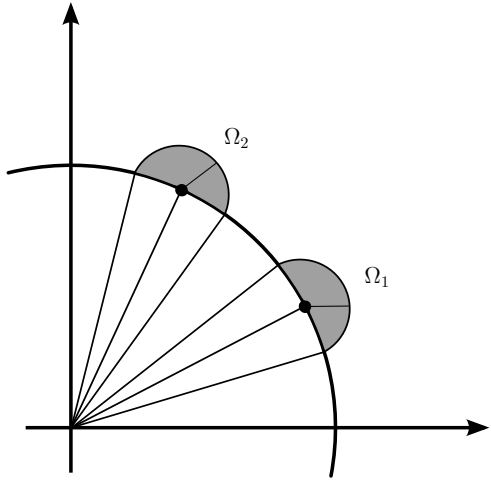}
  \end{center}
  \caption{The domains $\Omega_k$.}
  \label{fig:Omegak}
\end{figure}

The following is the main resolvent estimate required in the proof of Theorem~\ref{thm:stabpert}.

\begin{theorem} 
  \label{thm:RBAbdd}
  If Assumption~\textup{\ref{ass:Astandass}} is satisfied, there exists $M_1\geq 1$ such that 
  \eq{
  \sup_{\gl\in\Omega_k}  \norm{R(\gl,A)\BA_k^\ga}\leq M_1
  }
  for all $k\in\Igw$.
\end{theorem}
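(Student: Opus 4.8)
The plan is to reduce the estimate $\sup_{\gl\in\Omega_k}\norm{R(\gl,A)\BA_k^\ga}\le M_1$ to the hypothesis~\eqref{eq:Aresolventgrowthorder} by changing variables from $\gl$ on the slit region $\Omega_k$ near $\ep$ to a phase variable $\varphi$ near $\varphi_k$, and then controlling $\norm{\BA_k^\ga}$ on $\Omega_k$ separately. Concretely, for a fixed $k$ I would write $\gl = \rho e^{i\varphi}$ with $\rho\ge 1$ and $\gl$ close to $\ep$, and split the resolvent identity so as to compare $R(\gl,A)$ with $R(e^{i\varphi},A)$. Using the second resolvent identity,
\eq{
R(\gl,A) = R(e^{i\varphi},A) - (\gl - e^{i\varphi})R(\gl,A)R(e^{i\varphi},A),
}
and the standard Neumann-series bound available since $A$ is power bounded (so $\norm{R(\gl,A)}\le M/(\abs{\gl}-1)$ for $\abs\gl>1$ by the strong Kreiss condition already invoked in the text), one gets $\norm{R(\gl,A)}\le 2\norm{R(e^{i\varphi},A)}$ as soon as $\abs{\gl-e^{i\varphi}}\cdot\norm{R(e^{i\varphi},A)}\le 1/2$; and $\abs{\gl-e^{i\varphi}} = \rho-1 = \abs\gl-1$ is exactly the quantity the Kreiss bound controls. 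This is the discrete analogue of pushing the resolvent estimate from the boundary circle into the region outside $\D$, and it is where the "several modifications in estimating the resolvent near $\D$" mentioned in the introduction enter.

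Next I would handle the geometry linking $\abs{\gl-\ep}$, the phase distance $\abs{\varphi-\varphi_k}$, and $\abs\gl-1$. On $\Omega_k$ we have $0<\abs{\gl-\ep}\le r_A = \abs{1-e^{i\eps_A}}$, and since $\eps_A\le\pi/8$ the arc is short, so there are two-sided comparisons $c\abs{\varphi-\varphi_k}\le\abs{e^{i\varphi}-\ep}\le\abs{\varphi-\varphi_k}$ and, because $\gl$ lies within distance $r_A\le 1$ of the circle, both $\abs\gl-1$ and $\abs{e^{i\varphi}-\ep}$ are bounded by a constant multiple of $\abs{\gl-\ep}$ (and conversely $\abs{\gl-\ep}\le\abs\gl-1+\abs{e^{i\varphi}-\ep}$). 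Combining this with the previous paragraph: either $\abs{\varphi-\varphi_k}\le\eps_A$, in which case~\eqref{eq:Aresolventgrowthorder} gives $\norm{R(e^{i\varphi},A)}\le M_A\abs{\varphi-\varphi_k}^{-\ga}$, or $\abs{\varphi-\varphi_k}>\eps_A$, in which case $\norm{R(e^{i\varphi},A)}\le M_A$; and when $\abs{\gl-\ep}$ is, say, $\ge r_A/2$ the power-boundedness bound alone already gives $\norm{R(\gl,A)}\le M/(\abs\gl-1)\lesssim 1$. So on the "dangerous" part of $\Omega_k$ we obtain $\norm{R(\gl,A)}\le C\abs{\gl-\ep}^{-\ga}$ for a constant $C$ depending only on $M,M_A,\ga,\eps_A$.

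It remains to bound $\norm{\BA_k^\ga R(\gl,A)}$ — I would actually keep $\BA_k^\ga$ on the left as stated — by controlling $\norm{\BA_k^\ga}$ on $\Omega_k$ (note $\Omega_k\subset\C\setminus\D$, so $\gl\notin\gs(A)$ and also we need $\BA_k^\ga$ to make sense as a bounded operator there). Since $\BA_k = 1-\epc A$ is sectorial with a sectorial inverse and $A$ is bounded, $\BA_k$ is a bounded sectorial operator; the point is that for $\ga\ge1$ the fractional power $\BA_k^\ga$ is bounded, and on the relevant range its norm is harmless — the genuine content is the matching of the \emph{growth rate}, i.e. that $\BA_k^\ga$ is "of the right order" to cancel the $\abs{\gl-\ep}^{-\ga}$ blow-up. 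More precisely I expect to show $\norm{R(\gl,A)\BA_k^\ga}$ stays bounded by writing $R(\gl,A)\BA_k^\ga = R(\gl,A)(1-\epc A)^\ga$ and, on the part of $\Omega_k$ where $\abs{\gl-\ep}$ is small, using $1-\epc A = (1-\epc\gl) + \epc(\gl - A)$ so that $\BA_k^\ga$ is, up to bounded factors, comparable to a power of $(\gl-A)$ plus a scalar term of size $\abs{1-\epc\gl}^\ga\asymp\abs{\gl-\ep}^\ga$; the product with $R(\gl,A)$ then trades the $\abs{\gl-\ep}^{-\ga}$ against this $\abs{\gl-\ep}^\ga$. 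Making the binomial/fractional-power bookkeeping in this last step rigorous — i.e. justifying the comparison $\norm{(1-\epc A)^\ga x}\lesssim \abs{\gl-\ep}^\ga\norm x + \norm{(\gl-A)^{\lceil\ga\rceil}x}$-type estimate via the sectoriality of $\BA_k$ and the moment/interpolation inequalities for fractional powers (as in \cite{Haa06book}), uniformly in $k$ — is the main obstacle, and the one place where the "uniform sectoriality of $(\BA_k)_{k=1}^N$" established above is used in an essential way. Everything else is elementary once the parameter $r_A$ and the Kreiss bound are in hand, and the constant $M_1$ comes out depending only on $M,M_A,\ga,\eps_A$ and hence is uniform over $k\in\Igw$.
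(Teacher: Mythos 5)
Your first two paragraphs essentially reprove the paper's Lemma~\ref{lem:Okbdd}, i.e.\ the bound $\abs{\gl-\ep}^{\ga}\norm{R(\gl,A)}\leq M_0$ on $\Omega_k$ (the paper gets it from \eqref{eq:Aresolventgrowthorder}, the strong Kreiss condition and the resolvent identity $R(r\ep[],A)=R(\ep[],A)(1-(r-1)e^{i\varphi}R(r\ep[],A))$; your Neumann-series comparison needs the extra case distinction when $(\rho-1)\norm{R(e^{i\varphi},A)}$ is not small, but that is repairable). The genuine gap is the last step, and you flag it yourself as ``the main obstacle'': you never actually cancel the fractional power $\BA_k^{\ga}$ against the $\abs{\gl-\ep}^{-\ga}$ blow-up. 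The inequality you sketch, $\norm{\BA_k^{\ga}x}\lesssim\abs{\gl-\ep}^{\ga}\norm{x}+\norm{(\gl-A)^{\lceil\ga\rceil}x}$, is not in a form that composes with $R(\gl,A)$ on the correct side (you need a bound on $\norm{R(\gl,A)\BA_k^{\ga}y}$, not on $\norm{\BA_k^{\ga}x}$), the exponent $\lceil\ga\rceil$ does not match $\ga$, and for non-integer $\ga$ a ``binomial'' expansion of $\bigl((1-\epc\gl)+\epc(\gl-A)\bigr)^{\ga}$ is an infinite series whose operator-norm control is precisely the nontrivial point; so as written the proposal does not prove the theorem for non-integer $\ga$.

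The paper closes exactly this gap as follows. Write $\ga=n+\tilde{\ga}$ with $n=\floor{\ga}\geq1$ and $0\leq\tilde{\ga}<1$, and use the exact commuting identity $\ep R(\gl,A)\BA_k=1-(\gl-\ep)R(\gl,A)$ (your decomposition $\BA_k=\epc(\gl-A)+(1-\epc\gl)$ in disguise). The fractional part is handled by the Moment Inequality of Lemma~\ref{lem:Momentineq}: $\norm{R_\gl\BA_k^{\tilde{\ga}}}\leq M_{\tilde{\ga}}\norm{R_\gl}^{1-\tilde{\ga}}\norm{R_\gl\BA_k}^{\tilde{\ga}}$, where $\norm{R_\gl\BA_k}\leq1+\abs{\gl-\ep}\norm{R_\gl}$ by the identity; the elementary exponent computation $n/(1-\tilde{\ga})\geq\ga$ together with $\abs{\gl-\ep}\leq r_A\leq1$ and Lemma~\ref{lem:Okbdd} then gives $\abs{\gl-\ep}^{n}\norm{R(\gl,A)\BA_k^{\tilde{\ga}}}\leq\tilde{M}$ uniformly. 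Finally the integer part is removed by iterating the identity $n$ times, which yields $e^{in\varphi_k}R(\gl,A)\BA_k^{n}=(-1)^n(\gl-\ep)^{n}R(\gl,A)+\sum_{l=0}^{n-1}(-(\gl-\ep))^{\,n-1-l}e^{il\varphi_k}\BA_k^{l}$, and the uniform sectoriality of $(\BA_k)_k$ (giving $\norm{\BA_k^{r}}\leq K$ for $0\leq r\leq\ga$, uniformly in $k$) bounds the remainder terms; uniformity in $k$ of $M_{\tilde{\ga}}$ and $K$ is exactly what the uniform sectoriality buys, as you correctly anticipated. So your outline identifies the right ingredients, but the decisive fractional-power bookkeeping — the split $\ga=n+\tilde{\ga}$, the moment inequality, and the iteration of the identity — is missing and must be supplied for the proof to stand.
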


The proof of the theorem is based on the following two lemmas. The Moment Inequality in Lemma~\ref{lem:Momentineq} is an essential tool used frequently throughout the rest of the paper.

\begin{lemma}
  \label{lem:Momentineq}
Let $0<\tilde{\theta}<\theta$. There exists $M_{\tilde{\theta}}\geq 1$
  such that for all $k\in\Igw$ 
  \eq{
  \norm{\BA_k^{\tilde{\theta}}x} 
  &\leq M_{\tilde{\theta}} \norm{x}^{1-\tilde{\theta}/\theta}\norm{\BA_k^{\theta}x}^{\tilde{\theta}/\theta} \qquad \forall x\in X.
  }
  If $Y$ is a Banach space and $R\in \Lin(Y,X)$, then
  \eq{
  \norm{\BA_k^{\tilde{\theta}}R} 
  &\leq M_{\tilde{\theta}} \norm{R}^{1-\tilde{\theta}/\theta}\norm{\BA_k^{\theta}R}^{\tilde{\theta}/\theta}
  }
  for all $k$. The corresponding results are valid for $(\BA_k^\ast)_{k}$. 
\end{lemma}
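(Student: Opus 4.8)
The plan is to deduce this from the standard moment inequality for fractional powers of a single sectorial operator, keeping careful track of the fact that the constant can be chosen uniformly in $k$. Recall that for any sectorial operator $\Lambda$ with $0\in\rho(\Lambda)$ and any $0<\tilde\theta<\theta$, the classical moment (interpolation) inequality gives $\norm{\Lambda^{\tilde\theta}x}\leq K\norm{x}^{1-\tilde\theta/\theta}\norm{\Lambda^{\theta}x}^{\tilde\theta/\theta}$ for all $x$ in the domain of $\Lambda^\theta$, where the constant $K$ depends only on $\tilde\theta,\theta$ and the sectoriality constants of $\Lambda$ (see, e.g., \cite{Haa06book}). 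First I would invoke this result for each operator $\BA_k$ (which was shown above to be sectorial with bounded inverse, hence all fractional powers are defined and bounded on $X$). Second, and this is the only point requiring care, I would argue that the resulting constant $M_{\tilde\theta}$ can be taken independent of $k$: the discussion preceding the lemma establishes that the family $(\BA_k)_{k=1}^N$ is \emph{uniformly} sectorial, and the constant in the moment inequality depends continuously (and monotonically) on the sectoriality data, so taking $M_{\tilde\theta}$ to be the maximum over the finitely many $k\in\Igw$ — or directly the constant associated with the common sectoriality bound — yields a single $M_{\tilde\theta}\geq 1$ that works for all $k$. Since $X$ is a Hilbert space and the $\BA_k$ have bounded inverses, every $x\in X$ lies in the domain of $\BA_k^\theta$, so the inequality holds for all $x\in X$ with no domain restriction.

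For the operator-norm version, I would simply apply the vector inequality to $x=Ry$ for arbitrary $y\in Y$: this gives $\norm{\BA_k^{\tilde\theta}Ry}\leq M_{\tilde\theta}\norm{Ry}^{1-\tilde\theta/\theta}\norm{\BA_k^{\theta}Ry}^{\tilde\theta/\theta}\leq M_{\tilde\theta}(\norm{R}\,\norm{y})^{1-\tilde\theta/\theta}(\norm{\BA_k^{\theta}R}\,\norm{y})^{\tilde\theta/\theta}$, and since the exponents sum to $1$ the factors of $\norm{y}$ combine to $\norm{y}$; taking the supremum over $\norm{y}\leq 1$ gives the claim. Finally, the statement for $(\BA_k^\ast)_k$ follows verbatim because the adjoint family was already observed to be uniformly sectorial with bounded inverses, so the same argument applies.

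I do not anticipate a genuine obstacle here; the content is essentially a bookkeeping exercise ensuring uniformity in $k$, which is immediate once one notes the family is finite and uniformly sectorial. The only subtlety worth stating explicitly is why the moment-inequality constant depends only on the sectoriality angle and bound and not on finer properties of the individual operators — this is standard and can be cited from \cite{Haa06book}.
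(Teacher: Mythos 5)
Your overall route is the same as the paper's: invoke the moment inequality for sectorial operators for each fixed $k$ and obtain a $k$-independent constant from the uniform sectoriality of the (finite) family $(\BA_k)_k$, which is exactly what the paper does by citing \cite[Prop.~6.6.4]{Haa06book} together with \cite[Prop.~2.6.11]{Haa06book}; your reduction of the operator-norm statement to the vector statement by applying it to $x=Ry$ and using that the exponents sum to one is also fine, as is the remark that the adjoint family is handled identically.

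However, one assertion you make is false and, as stated, undercuts the hypothesis of the result you invoke: $\BA_k=1-e^{-i\varphi_k}A$ does \emph{not} have a bounded inverse, and $0\notin\rho(\BA_k)$. By Assumption~\ref{ass:Astandass}, $e^{i\varphi_k}\in\sigma(A)\cap\T$, and in fact $e^{i\varphi_k}\in\sigma_c(A)$, so $0\in\sigma_c(\BA_k)$; the paper only establishes that $\BA_k$ is injective with a (generally unbounded) sectorial inverse defined on $\ran(\BA_k)$. If the version of the moment inequality you quote genuinely required $0\in\rho(\Lambda)$, your argument would not apply to $\BA_k$ at all; fortunately the standard version (\cite[Prop.~6.6.4]{Haa06book}) needs only sectoriality, so the invertibility hypothesis should simply be dropped. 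Likewise, the fact that the inequality holds for every $x\in X$ with no domain restriction has nothing to do with invertibility or with $X$ being a Hilbert space: it holds because $\BA_k\in\Lin(X)$ is a bounded sectorial operator, so $\BA_k^{\theta}\in\Lin(X)$ for every $\theta\geq 0$ and $\Dom(\BA_k^{\theta})=X$. The same correction applies to your closing remark that the adjoints have bounded inverses. With these repairs your proof is correct and coincides with the paper's.
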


\begin{proof}
  For a fixed $k$ the properties follow from~\cite[Prop. 6.6.4]{Haa06book}. However, by~\cite[Prop. 2.6.11]{Haa06book} and the uniform sectoriality of the operator family $(\BA_k)_k$ it is possible to choose $M_{\tilde{\theta}}$ to be independent of $k$.  
\end{proof}

\begin{lemma}
  \label{lem:Okbdd}
  If Assumption~\textup{\ref{ass:Astandass}} is satisfied, then there exists $M_0\geq 1$ such that for all $k$
  \eq{
  \sup_{\gl\in \Omega_k}\; \abs{\gl-\ep}^\ga \norm{R(\gl,A)}\leq M_0.
  }
\end{lemma}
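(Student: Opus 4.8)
The plan is to translate the hypothesis~\eqref{eq:Aresolventgrowthorder}, which controls $\norm{R(e^{i\varphi},A)}$ for points \emph{on} the unit circle near $\varphi_k$, into the desired estimate on the two-dimensional region $\Omega_k$. The geometric fact I want to exploit is that every $\gl\in\Omega_k$ with $\gl\notin\T$ can be reached from a boundary point $e^{i\varphi}\in\T$ by moving radially outward; since $A$ is power bounded, the strong Kreiss resolvent condition already gives a clean bound on $R(\gl,A)$ in terms of the distance $\abs{\gl}-1$ to the circle, and this will take care of the region ``far'' from $\T$.

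First I would split $\Omega_k$ into the part near the circle and the part away from it. Concretely, write $\gl = s\,e^{i\varphi}$ with $s=\abs{\gl}\geq 1$. For $s-1$ bounded below by a fixed multiple of $r_A$ (say $s\geq 1+c\,\abs{\gl-e^{i\varphi_k}}$ for a suitable geometric constant $c$ depending only on $\eps_A$), the bound $\norm{R(\gl,A)}\leq M/(s-1)\leq M/(c\,\abs{\gl-e^{i\varphi_k}})$ from the strong Kreiss condition quoted in Section~\ref{sec:resest} immediately yields $\abs{\gl-e^{i\varphi_k}}^\ga\norm{R(\gl,A)}\lesssim \abs{\gl-e^{i\varphi_k}}^{\ga-1}\leq r_A^{\ga-1}$, using $\ga\geq 1$ and $r_A\leq 1$. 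For the complementary region, where $\gl$ is close to the circle, I would use the resolvent identity to compare $R(\gl,A)$ with $R(e^{i\varphi},A)$: writing $R(\gl,A) = R(e^{i\varphi},A)\bigl(I + (e^{i\varphi}-\gl)R(\gl,A)\bigr)$ and hence
\eq{
R(\gl,A) = \bigl(I - (\gl - e^{i\varphi})R(e^{i\varphi},A)\bigr)^{-1} R(e^{i\varphi},A),
}
which is valid as soon as $\abs{\gl-e^{i\varphi}}\,\norm{R(e^{i\varphi},A)}\leq \tfrac12$. Since $\abs{\gl-e^{i\varphi}} = s-1$ and, on this near-circle part of $\Omega_k$, $\norm{R(e^{i\varphi},A)}\leq M_A/\abs{\varphi-\varphi_k}^\ga$ (or $\leq M_A$ when $\abs{\varphi-\varphi_k}>\eps_A$), this smallness condition holds precisely when $s-1$ is small relative to $\abs{\varphi-\varphi_k}^\ga$ — which is exactly the regime left uncovered by the first case, after choosing $c$ correctly. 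In that Neumann-series regime one gets $\norm{R(\gl,A)}\leq 2\norm{R(e^{i\varphi},A)}\leq 2M_A/\abs{\varphi-\varphi_k}^\ga$, and since $\abs{\gl - e^{i\varphi_k}}$ is comparable (up to constants depending only on $\eps_A$, using $0<\abs{\varphi-\varphi_k}\leq$ a small angle so chords and arcs are equivalent) to $\abs{\varphi-\varphi_k}$, we obtain $\abs{\gl-e^{i\varphi_k}}^\ga\norm{R(\gl,A)}\lesssim M_A$.

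The main obstacle, and the one requiring the most care, is the bookkeeping of the geometric comparisons near $e^{i\varphi_k}$: one must verify that the two cases (large $s-1$, small $s-1$) genuinely cover all of $\Omega_k$ with a single choice of the threshold constant, and that $\abs{\gl-e^{i\varphi_k}}$, $\abs{\varphi-\varphi_k}$, $\abs{e^{i\varphi}-e^{i\varphi_k}}$, and $s-1$ are all mutually comparable in the relevant ranges (here $\eps_A\leq\pi/8$ keeps all angles small and away from antipodal pathologies, so elementary estimates like $\tfrac2\pi\abs{\varphi-\varphi_k}\leq\abs{e^{i\varphi}-e^{i\varphi_k}}\leq\abs{\varphi-\varphi_k}$ apply). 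A minor additional point is the boundary point $\gl=e^{i\varphi_k}$ itself, which is excluded from $\Omega_k$ by the condition $\abs{\gl-e^{i\varphi_k}}>0$, so no issue arises there; and the constants $M_0$ can be taken uniform in $k$ because $M_A$, $M$, $\eps_A$, and $r_A$ are all independent of $k$ in Assumption~\ref{ass:Astandass}. Taking $M_0$ to be the maximum of the constants produced in the two cases (and at least $1$) completes the argument.
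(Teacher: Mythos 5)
Your overall strategy (strong Kreiss bound away from the circle, comparison with the boundary resolvent near the circle) is workable, but the dichotomy as you set it up does not cover $\Omega_k$ once $\ga>1$ — and this is precisely the point you flag as the delicate one. With the linear threshold $s\geq 1+c\,\abs{\gl-e^{i\varphi_k}}$, the complementary region only gives you $s-1< c\,\abs{\gl-e^{i\varphi_k}}$, hence $s-1\lesssim\abs{\varphi-\varphi_k}$; but your Neumann step needs $(s-1)\,\norm{R(e^{i\varphi},A)}\leq\tfrac12$, and Assumption~\ref{ass:Astandass} only caps the boundary resolvent by $M_A\abs{\varphi-\varphi_k}^{-\ga}$, so that step requires $s-1\lesssim\abs{\varphi-\varphi_k}^{\ga}$. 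For $\ga>1$ the intermediate band $\abs{\varphi-\varphi_k}^{\ga}\ll s-1\ll\abs{\varphi-\varphi_k}$ (take $\ga=2$, $\abs{\varphi-\varphi_k}=\gd$, $s-1=\gd^{3/2}$, and an $A$ whose resolvent genuinely attains the order $\gd^{-2}$ on $\T$) falls into neither case, and no choice of the constant $c$ repairs this; your assertion that the two regimes match ``after choosing $c$ correctly'' is exactly where the argument breaks. The threshold must involve the $\ga$-th power, not a constant multiple of $\abs{\gl-e^{i\varphi_k}}$.

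The repair is small and stays within your plan: split instead on whether $(s-1)\,\norm{R(e^{i\varphi},A)}\leq\tfrac12$ (equivalently, up to constants, $s-1$ versus $\abs{\varphi-\varphi_k}^{\ga}$). In the Kreiss regime one then has $s-1\geq \abs{\varphi-\varphi_k}^{\ga}/(2M_A)$, so with $\abs{\gl-e^{i\varphi_k}}^{\ga}\leq 2^{\ga}\bigl((s-1)^{\ga}+\abs{\varphi-\varphi_k}^{\ga}\bigr)$ and $(s-1)^{\ga}\leq s-1$ the bound $\abs{\gl-e^{i\varphi_k}}^{\ga}\norm{R(\gl,A)}\leq 2^{\ga}M(1+2M_A)$ follows; in the other regime your Neumann argument goes through as written. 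It is worth noting that the paper avoids the dichotomy altogether: it uses the exact identity $R(re^{i\varphi},A)=R(e^{i\varphi},A)\bigl(1-(r-1)e^{i\varphi}R(re^{i\varphi},A)\bigr)$, in which the second factor is bounded by $1+M$ by the Kreiss condition with no smallness assumption, and then the same splitting of $\abs{\gl-e^{i\varphi_k}}^{\ga}$ finishes the estimate in one stroke — sidestepping the covering issue entirely. Your remaining points (chord--arc comparability under $\eps_A\leq\pi/8$, the ray $\varphi=\varphi_k$, the on-circle case, and uniformity of $M_0$ in $k$) are fine.
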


\begin{proof}
  Let $M= \sup_{n\in\N}\norm{A^n}$. From Assumption~\ref{ass:Astandass} we have 
\eq{
\sup_{0<\abs{\varphi-\varphi_k}\leq \eps_A} \abs{\varphi-\varphi_k}^\ga \norm{R(\ep[],A)} \leq M_A.
}
The strong Kreiss resolvent condition implies $(\abs{\gl}-1)\norm{R(\gl,A)}\leq M$ whenever $\abs{\gl}>1$.

Let $\gl = r\ep[]\in \Omega_k$. 
Since $\abs{\varphi-\varphi_k}\leq \eps_A\leq \pi/8$, 
and since $\abs{\varphi-\varphi_k}$ is equal to the arc length between points $\ep[]\in\T$ and $\ep\in\T$, we have 
\ieq{
\abs{e^{i\varphi}-e^{i\varphi_k}}  
\leq \abs{\varphi-\varphi_k}.
}
For $r=1$ the bound 
$\abs{\gl-\ep}^\ga \norm{R(\gl,A)} \leq \abs{\varphi-\varphi_k}^\ga \norm{R(e^{i\varphi},A)}\leq M_A$ follows from~\eqref{eq:Aresolventgrowthorder}. 
On the other hand, if $\varphi=\varphi_k$, $1<r\leq 1+r_A$ and $\gl = re^{i\varphi_k}$, then the strong Kreiss resolvent condition implies
\eq{
\abs{\gl-e^{i\varphi_k}}^\ga \norm{R(\gl,A)}
=(r-1)^\ga \norm{R(\gl,A)}
\leq (r-1) \norm{R(\gl,A)}
\leq M
}
since $(r-1)^\ga \leq r-1$ due to the fact that $\ga\geq 1$ and $0<r-1\leq r_A\leq 1$.
It remains to consider the case $\gl = re^{i\varphi} \in \Omega_k$ with $r>1$ and $\varphi\neq \varphi_k$. 
We can estimate
\eq{
\abs{re^{i\varphi}-e^{i\varphi_k}}
\leq\abs{re^{i\varphi}-e^{i\varphi}} + \abs{e^{i\varphi}-e^{i\varphi_k}}
\leq r-1 + \abs{\varphi-\varphi_k}.
} 
Since $\ga \geq 1 $ and $1<r\leq 2$, we have $(r-1)^\ga \leq r-1$ and
(using the scalar inequality $(a+b)^{\ga} \leq 2^{\ga}(a^{\ga} + b^{\ga})$ for $a,b\geq 0$) we get
\eq{
\abs{\gl-e^{i\varphi_k}}^\ga
\leq  2^{\ga} \left( r-1 + \abs{\varphi-\varphi_k}^\ga \right) 
}
and the resolvent identity $R(re^{i\varphi},A) = R(e^{i\varphi},A) (1- (r-1)e^{i\varphi} R(re^{i\varphi},A))$ implies
\eq{
\MoveEqLeft
\abs{\gl-e^{i\varphi_k}}^\ga \norm{R(\gl,A)} 
\leq 2^{\ga} \left( r-1 + \abs{\varphi-\varphi_k}^\ga \right) \norm{R(\gl,A)} \\
&= 2^{\ga}  (r-1) \norm{R(re^{i\varphi},A)}+ 2^{\ga} \abs{\varphi-\varphi_k}^\ga \norm{R(e^{i\varphi},A)(1 - (r-1) e^{i\varphi} R(re^{i\varphi},A))} \\
&\leq 2^{\ga} M+ 2^{\ga}\abs{\varphi-\varphi_k}^\ga \norm{R(e^{i\varphi},A)} (1+ (r-1) \norm{R(re^{i\varphi},A)} )\\
&\leq 2^{\ga} \left( M +  M_A (1+ M ) \right).
}
Since in each of the situations the bound for $\abs{\gl-e^{i\varphi_k}}^\ga \norm{R(\gl,A)}$ is independent of $k\in \Igw$, the proof is complete.
\end{proof}

\begin{proof}[Proof of Theorem~\textup{\ref{thm:RBAbdd}}]
Let $k\in\Igw$, $\gl\in \Omega_k$, and denote $R_\gl = R(\gl,A)$ and $\gl_k=\gl-\ep$ for brevity.

  We begin by showing that if $\ga = n+\tilde{\ga}$ with $n\in\N$ and $0\leq \tilde{\ga}<1$, then there exists $\tilde{M}\geq 1$ (independent of $k$) such that 
  \eqn{
  \label{eq:ARfracbnd}
  \sup_{\gl\in\Omega_k} \abs{\gl_k}^n \norm{R(\gl,A)\BA_k^{\tilde{\ga}}}\leq \tilde{M}.
  } 
  By Lemma~\ref{lem:Okbdd} there exists $M_0\geq 1$ such that $\abs{\gl-\ep}^\ga \norm{R(\gl,A)}\leq M_0$ for all $k$.  
If $\ga=n$ and $\tilde{\ga}=0$, we have 
\ieq{
\abs{\gl_k}^n \norm{R_\gl\BA_k^{\tilde{\ga}}}
=\abs{\gl_k}^\ga \norm{R_\gl}\leq M_0.
}
Thus the claim is satisfied with $\tilde{M}=M_0$, which is independent of $k$.

If $0<\tilde{\ga}<1$, then 
by Lemma~\ref{lem:Momentineq} there exists a constant $M_{\tilde{\ga}}$ independent of $k$ and $\gl$ such that $\norm{R_\gl\BA_k^{\tilde{\ga}}}\leq M_{\tilde{\ga}} \norm{R_\gl}^{1-\tilde{\ga}}\norm{R_\gl\BA_k}^{\tilde{\ga}}$. Using
  \eqn{
  \label{eq:residentity}
 \ep R_\gl\BA_k
  = R_\gl(\ep- A)
  = 1-\gl_k R_\gl
  }
and the scalar inequality $(a+b)^{\tilde{\ga}} \leq 2^{\tilde{\ga}}(a^{\tilde{\ga}}+b^{\tilde{\ga}})$
we get 
\eq{
\MoveEqLeft \abs{\gl_k}^n  \norm{ R_\gl\BA_k^{\tilde{\ga}}} 
\leq M_{\tilde{\ga}}  \abs{\gl_k}^n \norm{R_\gl}^{1-\tilde{\ga}} \norm{ R_\gl\BA_k}^{\tilde{\ga}}
= M_{\tilde{\ga}}  \abs{\gl_k}^n \norm{R_\gl}^{1-\tilde{\ga}} \norm{1-\gl_k R_\gl}^{\tilde{\ga}}\\
&\leq 2^{\tilde{\ga}}M_{\tilde{\ga}}  \abs{\gl_k}^n \norm{R_\gl}^{1-\tilde{\ga}} (1+\abs{\gl_k}^{\tilde{\ga}}\norm{R_\gl}^{\tilde{\ga}})
\leq 2^{\tilde{\ga}}M_{\tilde{\ga}} \left[ (\abs{\gl_k}^{\frac{n}{1-\tilde{\ga}}} \norm{R_\gl})^{1-\tilde{\ga}} +\abs{\gl_k}^{n+\tilde{\ga}}\norm{R_\gl} \right] .
}
Since $n= \floor{\ga}\geq 1$ we have 
\eq{
\frac{n}{1-\tilde{\ga}}
=\frac{n\ga}{(1-\tilde{\ga})(n+\tilde{\ga})}
=\frac{n\ga}{n-\tilde{\ga}(n-1) -\tilde{\ga}^2}
\geq \ga.
}
Since $\gl\in \Omega_k$, we have $\abs{\gl_k}\leq r_A\leq 1$, and thus $\abs{\gl_k}^{\frac{n}{1-\tilde{\ga}}}\leq \abs{\gl_k}^{\ga}$, and
\eq{
 \abs{\gl_k}^n  \norm{ R_\gl\BA_k^{\tilde{\ga}}} 
\hspace{-.2ex}\leq 2^{\tilde{\ga}}M_{\tilde{\ga}}  \left[ (\abs{\gl_k}^\ga \norm{R_\gl})^{1-\tilde{\ga}} \hspace{-.3ex} +\abs{\gl_k}^\ga\norm{R_\gl} \right]
\leq 2^{\tilde{\ga}}M_{\tilde{\ga}}  \left[ M_0^{1-\tilde{\ga}} \hspace{-.3ex} +M_0 \right]
\leq 2^{\tilde{\ga}+1}M_{\tilde{\ga}} M_0,  
}
since $M_0\geq 1$. Therefore the claim holds with $\tilde{M}=2^{\tilde{\ga}+1}M_{\tilde{\ga}}M_0$, which is independent of $k$.  

We can now show that there exists $M_1\geq 1$ such that~\eqref{eq:ARfracbnd} is satisfied for all $k$. Since $(\BA_k)_k$ is a uniformly sectorial family of operators, by~\cite[Prop. 3.1.1(a)]{Haa06book} there exists $K>0$ such that $\norm{\BA_k^r }\leq K$ for all $0\leq r\leq \ga$ and $k$. Using the identity~\eqref{eq:residentity} repeatedly, we obtain
\eq{
e^{in\varphi_k} R(\gl,A)\BA_k^n
= (-\gl_k)^n R(\gl,A) + \sum_{l=0}^{n-1} (-\gl_k)^{n-1-l}e^{il\varphi_k} \BA_k^l 
}
and thus for $\ga = n+\tilde{\ga}$ (using $\abs{\gl_k}\leq r_A\leq 1$)
\eq{
 \norm{R(\gl,A)\BA_k^\ga}
&= \norm{e^{in\varphi_k} R(\gl,A)\BA_k^n\BA_k^{\tilde{\ga}}}
\leq \abs{\gl_k}^n\norm{ R(\gl,A)\BA_k^{\tilde{\ga}}} + \sum_{l=0}^{n-1} \abs{\gl_k}^{n-1-l} \norm{\BA_k^{l+\tilde{\ga}} }\\
&\leq\tilde{M}  + n  K.
}
Since the bound is independent of both $\gl\in\Omega_k$ and $k$, the proof is complete.
\end{proof}

\begin{lemma}
  \label{lem:RbddOcomp}
Let Assumption~\textup{\ref{ass:Astandass}} be satisfied. There exists $M_2\geq 1$ such that
\eq{
\sup_{\gl\notin \D\cup\bigcup_k \Omega_k  } \norm{R(\gl,A)}\leq M_2.
}
\end{lemma}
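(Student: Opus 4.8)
The plan is to show that the complement of $\D\cup\bigcup_k\Omega_k$ in $\C$ splits into two pieces on each of which $\norm{R(\gl,A)}$ is controlled by already-established estimates, and then invoke a compactness argument to cover the remaining bounded region. First I would observe that since $A$ is power bounded with $M=\sup_{n}\norm{A^n}$, the strong Kreiss resolvent condition gives $\norm{R(\gl,A)}\leq M/(\abs{\gl}-1)$ for all $\abs{\gl}>1$. Hence, for any fixed $\delta>0$, on the region $\setm{\gl}{\abs{\gl}\geq 1+\delta}$ we immediately have $\norm{R(\gl,A)}\leq M/\delta$. This disposes of the "far field'' part of the complement.

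Next I would handle the annular region $\setm{\gl}{1\leq\abs{\gl}\leq 1+\delta}$ with $\delta$ chosen small (say $\delta\leq r_A$). A point $\gl$ in this annulus but outside $\bigcup_k\Omega_k$ satisfies $\abs{\gl-\ep}>r_A$ for every $k\in\Igw$, so $\gl$ stays a fixed distance $r_A$ away from each of the finitely many boundary spectral points $\ep$. Writing $\gl=re^{i\varphi}$ with $1\leq r\leq 1+\delta$, I would use the resolvent identity $R(re^{i\varphi},A)=R(e^{i\varphi},A)\bigl(1-(r-1)e^{i\varphi}R(re^{i\varphi},A)\bigr)$ exactly as in the proof of Lemma~\ref{lem:Okbdd}, together with $\norm{R(re^{i\varphi},A)}\leq M/(r-1)$ when $r>1$, to reduce the bound to one for $\norm{R(e^{i\varphi},A)}$ with $e^{i\varphi}\in\T$ and $\dist(e^{i\varphi},\{\ep\}_k)\gtrsim r_A$. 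Since $\abs{\varphi-\varphi_k}$ is the arc length, $\abs{e^{i\varphi}-\ep}\leq\abs{\varphi-\varphi_k}$, and a lower bound $\abs{e^{i\varphi}-\ep}>r_A$ translates (using $\eps_A\leq\pi/8$, so arcs and chords are comparable up to a universal constant) into $\abs{\varphi-\varphi_k}>c\,\eps_A$ for a universal $c>0$; then the second half of part~1 of Assumption~\ref{ass:Astandass}, $\norm{R(e^{i\varphi},A)}\leq M_A$ whenever $\abs{\varphi-\varphi_k}>\eps_A$ for all $k$ — combined, if necessary, with the polynomial estimate \eqref{eq:Aresolventgrowthorder} on the remaining sliver $c\,\eps_A<\abs{\varphi-\varphi_k}\leq\eps_A$, which is itself bounded there by $M_A/(c\eps_A)^\ga$ — gives a uniform bound on $\norm{R(e^{i\varphi},A)}$. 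Feeding this back through the resolvent identity yields $\norm{R(\gl,A)}\leq M_2$ on the whole annular remainder, with $M_2$ independent of $k$ (the number $N$ of spectral points is finite and fixed).

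The only genuinely delicate point is making sure the chord/arc comparison is done correctly so that the region left uncovered by $\bigcup_k\Omega_k$ inside the closed annulus is exactly the set where either $\norm{R(e^{i\varphi},A)}\leq M_A$ applies directly or the polynomial bound applies with the denominator bounded away from zero; there is a minor bookkeeping subtlety in that $\Omega_k$ is defined by $\abs{\gl-\ep}\leq r_A$ (a chord condition), whereas the resolvent growth hypothesis is phrased via $\abs{\varphi-\varphi_k}\leq\eps_A$ (an arc condition), and one must check the two are compatible up to the fixed constant coming from $\eps_A\leq\pi/8$. Everything else is a routine triangle-inequality-plus-Kreiss computation, essentially a repackaging of the estimates already carried out in Lemma~\ref{lem:Okbdd}, so I do not expect any substantive obstacle.
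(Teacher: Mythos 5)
Your argument is correct, and it reaches the bound by a slightly different decomposition than the paper. The paper treats the whole complement of $\D\cup\bigcup_k\Omega_k$ in one stroke: for $\gl=re^{i\varphi}$ there it projects radially to the point $\gl_0=r_0e^{i\varphi}$ on the boundary of $\D\cup\bigcup_k\Omega_k$; that point either lies on $\T$, where Assumption~\ref{ass:Astandass} gives $\norm{R(\gl_0,A)}\leq M_A$, or on the arc $\abs{\gl_0-\ep}=r_A$ of some $\Omega_k$, where Lemma~\ref{lem:Okbdd} gives $\norm{R(\gl_0,A)}\leq M_0/r_A^{\ga}$; the resolvent identity $R(\gl,A)=R(\gl_0,A)+(\gl_0-\gl)R(\gl_0,A)R(\gl,A)$ with $\abs{\gl-\gl_0}\leq\abs{\gl}-1$ and the strong Kreiss bound then yields $\norm{R(\gl,A)}\leq\max\set{M_A,M_0/r_A^\ga}(1+M)$ uniformly, with no far-field/annulus split needed. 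You instead separate a far field $\abs{\gl}\geq 1+\delta$ (pure Kreiss) from an annulus, project inside the annulus all the way down to $\T$, and check via the chord/arc comparison that the projected point either has $\abs{\varphi-\varphi_k}>\eps_A$ for all $k$ or lies in a sliver where \eqref{eq:Aresolventgrowthorder} is evaluated with $\abs{\varphi-\varphi_k}$ bounded below by a fixed fraction of $\eps_A$; this does close (take, e.g., $\delta\leq r_A/2$, so that $\abs{e^{i\varphi}-\ep}\geq r_A-\delta\geq r_A/2$ and hence $\abs{\varphi-\varphi_k}\geq r_A/2$), and your final step is the same resolvent-identity-plus-Kreiss computation. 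The paper's version buys brevity by stopping the projection at $\partial\Omega_k$ and reusing Lemma~\ref{lem:Okbdd} verbatim, avoiding the $\delta$-splitting and the chord/arc bookkeeping; yours avoids invoking Lemma~\ref{lem:Okbdd} on $\partial\Omega_k$ at the cost of the extra case analysis you flagged. Two minor remarks: the ``compactness argument'' announced at the start is never actually needed, since your explicit estimates are already uniform; and, exactly as in the paper, the points $\ep$ themselves must be tacitly excluded from the supremum, since they lie outside $\bigcup_k\Omega_k$ but not in $\rho(A)$.
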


\begin{proof}
  Let $\gl =r\ep[]\in \C\setminus \left(\D\cup \bigcup_k \Omega_k \right)$ and let $\gl_0=r_0\ep[]$ be such that $1\leq r_0\leq r$ and $\gl_0$ lies on the boundary of $\D \cup \bigcup_k \Omega_k$. Then either $\gl_0\in \T$, which implies $\norm{R(\gl_0,A)}\leq M_A$ by Assumption~\ref{ass:Astandass}, or otherwise $\gl_0\in \Omega_k$ and $\abs{\gl_0-\ep}=r_A$ for some $k\in \Igw$. By Lemma~\ref{lem:Okbdd} we have that there exists $M_0$ (independent of $k$) such that in the latter case
\eq{
\abs{\gl_0-\ep}^\ga \norm{R(\gl_0,A)}\leq M_0 
\qquad \Leftrightarrow \qquad 
\norm{R(\gl_0,A)}\leq \frac{M_0}{r_A^\ga} .
}
Now, if $M=\sup_{n\in\N}\norm{A^n}$, then $(\abs{\gl}-1) \norm{R(\gl,A)}\leq M$ by the strong Kreiss resolvent condition.
Using the resolvent identity $R(\gl,A) = R(\gl_0,A) + (\gl_0-\gl) R(\gl_0,A)R(\gl,A)$ and $\abs{\gl-\gl_0}=r-r_0\leq r-1 = \abs{\gl}-1$ we have
\eq{
\MoveEqLeft \norm{R(\gl,A)} 
\leq \norm{R(\gl_0,A)}(1 + \abs{\gl-\gl_0} \norm{R(\gl,A)})\\
&\leq \max\set{M_A,M_0/r_A^\ga} (1+(\abs{\gl}-1)\norm{R(\gl,A)})
\leq \max\set{M_A,M_0/r_A^\ga} (1+M).
}
Since the bound is independent of $\gl$, this concludes the proof.
\end{proof}

Combining the above results shows that the growth of the resolvent operator $R(\gl,A)$ near the unit disk $\D$ is cancelled by the operator $\BA_1^\ga\cdots \BA_N^\ga$.

\begin{corollary} 
  \label{cor:RBAbdd}
  If Assumption~\textup{\ref{ass:Astandass}} is satisfied, then 
  \eq{
\sup_{\gl\notin \D \cup \set{\ep}_k}
  \norm{R(\gl,A)\BA_1^\ga\cdots \BA_N^\ga}<\infty.
  }
\end{corollary}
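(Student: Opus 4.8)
The plan is to split the set $\C\setminus(\D\cup\set{\ep}_k)$ into the piece $\bigcup_{k}\Omega_k$ and its complement, and to bound $R(\gl,A)\BA_1^\ga\cdots\BA_N^\ga$ separately on each piece using the estimates already obtained. Two preliminary observations make this work. First, since $(\BA_k)_{k=1}^N$ is uniformly sectorial, exactly as in the proof of Theorem~\ref{thm:RBAbdd} (via \cite[Prop. 3.1.1(a)]{Haa06book}) there is a constant $K\geq 1$ with $\norm{\BA_k^\ga}\leq K$ for all $k\in\Igw$, so each of the $N$ factors $\BA_k^\ga$ is a bounded operator of norm at most $K$. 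Second, the operators $\BA_1,\dots,\BA_N$ commute, hence so do their fractional powers, and each $\BA_k^\ga$ commutes with $R(\gl,A)$ for $\gl\in\rho(A)$; this lets me reorder the product $\BA_1^\ga\cdots\BA_N^\ga$ freely and, in particular, pull any chosen factor $\BA_k^\ga$ next to $R(\gl,A)$.

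On the complement $\gl\notin\D\cup\bigcup_k\Omega_k$ the argument is immediate: Lemma~\ref{lem:RbddOcomp} gives $\norm{R(\gl,A)}\leq M_2$, and submultiplicativity together with the uniform bound on the fractional powers yields $\norm{R(\gl,A)\BA_1^\ga\cdots\BA_N^\ga}\leq M_2 K^N$, a bound independent of $\gl$.

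Next suppose $\gl\in\Omega_k$ for some $k\in\Igw$ (if $\gl$ lies in several of the $\Omega_k$, I simply fix one such $k$; disjointness of the regions is not needed). Using commutativity I write $R(\gl,A)\BA_1^\ga\cdots\BA_N^\ga = \bigl(R(\gl,A)\BA_k^\ga\bigr)\prod_{j\neq k}\BA_j^\ga$. Theorem~\ref{thm:RBAbdd} bounds the first factor by $M_1$, and the remaining $N-1$ factors are each bounded by $K$, so $\norm{R(\gl,A)\BA_1^\ga\cdots\BA_N^\ga}\leq M_1 K^{N-1}$, again independently of $\gl$ and $k$. Since every $\gl\notin\D\cup\set{\ep}_k$ falls into one of these two cases, the supremum in the statement is at most $\max\set{M_2 K^N,\, M_1 K^{N-1}}<\infty$.

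There is no serious obstacle here: the corollary is essentially a bookkeeping combination of Theorem~\ref{thm:RBAbdd} and Lemma~\ref{lem:RbddOcomp}. The only points requiring a word of justification are that the bounded fractional powers $\BA_k^\ga$ commute among themselves and with $R(\gl,A)$, so that the product may be rearranged and a single factor extracted, and that the two cases above genuinely exhaust $\C\setminus(\D\cup\set{\ep}_k)$ — both of which follow at once from the construction of the regions $\Omega_k$ and the commutativity of the $\BA_k$ recorded at the start of Section~\ref{sec:resest}.
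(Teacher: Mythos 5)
Your argument is correct and is exactly the combination the paper intends (the corollary is stated with the remark that it follows by ``combining the above results''): Theorem~\ref{thm:RBAbdd} on each $\Omega_k$, Lemma~\ref{lem:RbddOcomp} off $\D\cup\bigcup_k\Omega_k$, together with the uniform bound $\norm{\BA_k^\ga}\leq K$ and commutativity of the $\BA_k^\ga$ with each other and with $R(\gl,A)$. Nothing further is needed.
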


\section{The Preservation of Strong Stability}
\label{sec:strongstabpres}

In this section we present the proof of Theorem~\ref{thm:stabpert}. We begin by studying the change of the spectrum of $A$ under the perturbations.

\begin{theorem}
  \label{thm:specpert}
  Let Assumption~\textup{\ref{ass:Astandass}} be satisfied with $\gb+\gg\geq \ga$. 
  There exists $\gd>0$ such that if
  \eq{
  \norm{\BA_k^{-\gb} B}<\gd, \quad \mbox{and} \quad  \norm{(\BA_k^\ast)^{-\gg}C^\ast}<\gd,
  }
  for every $k$, then
  $\gs(A+BC)\subset \D\cup \set{\ep}_{k=1}^N$ and
  $\set{\ep}_k\subset \gs(A+BC)\setminus \gs_p(A+BC)$.
  In particular, under the above conditions we have
  \eqn{
  \label{eq:Dglunifbdd}
\sup_{\gl\notin \D\cup \set{\ep}_k} \; \norm{(1-CR(\gl,A)B)\inv}<\infty.
  }
\end{theorem}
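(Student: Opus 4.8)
The plan is to reduce everything to the resolvent estimates of Section~\ref{sec:resest} via a standard Sherman--Morrison--Woodbury argument. The key algebraic identity is that for $\gl\in\rho(A)$, the operator $\gl-(A+BC)$ is invertible if and only if $1-CR(\gl,A)B\in\Lin(\Y)$ is invertible, and in that case
\eq{
R(\gl,A+BC) = R(\gl,A) + R(\gl,A)B(1-CR(\gl,A)B)\inv CR(\gl,A).
}
So the whole theorem hinges on controlling $1-CR(\gl,A)B$ on the set $\gl\notin\D\cup\set{\ep}_k$.

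First I would split $CR(\gl,A)B$ using the fractional powers. Since $\gb+\gg\geq\ga$, I can write $R(\gl,A)$ sandwiched between $\BA_k^{-\gb}$ on the left (absorbed into $B$) and $(\BA_k^\ast)^{-\gg}$ on the right (absorbed into $C$), but the cleaner route is to use Corollary~\ref{cor:RBAbdd}: on $\Omega_k$ the product $\BA_1^\ga\cdots\BA_N^\ga R(\gl,A)$ is bounded, and away from all the $\Omega_k$ the resolvent itself is bounded by Lemma~\ref{lem:RbddOcomp}. On $\Omega_k$ one has $\abs{\gl-\ep[l]}$ bounded below for $l\neq k$ (because $\eps_A\leq d_A/3$ separates the points), so $\BA_l^\ga$ for $l\neq k$ is invertible with uniformly bounded inverse there; hence $\BA_k^\ga R(\gl,A)$ is uniformly bounded on $\Omega_k$. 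Then using the Moment Inequality (Lemma~\ref{lem:Momentineq}) to interpolate, and the hypothesis $\gb+\gg\geq\ga$, I would show
\eq{
\norm{CR(\gl,A)B}
\leq \norm{(\BA_k^\ast)^{-\gg}C^\ast}^\ast\,\norm{\BA_k^\gg R(\gl,A)\BA_k^\gb}\,\norm{\BA_k^{-\gb}B}
}
up to constants, where the middle factor is uniformly bounded on $\Omega_k$ (split $\BA_k^\ga = \BA_k^\gg\cdot\BA_k^{\gb}\cdot\BA_k^{\ga-\gb-\gg}$ if $\gb+\gg>\ga$, using uniform sectoriality to bound the nonnegative leftover power, or directly when $\gb+\gg=\ga$). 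On the complement of $\bigcup_k\Omega_k$ (outside $\D$), $R(\gl,A)$ and all the relevant fractional powers of $\BA_k$ are uniformly bounded, so again $\norm{CR(\gl,A)B}\leq \text{const}\cdot\gd^2$ (or $\text{const}\cdot\gd$). Thus choosing $\gd>0$ small enough forces $\sup\norm{CR(\gl,A)B}<1$ on all of $\C\setminus(\D\cup\set{\ep}_k)$, which gives~\eqref{eq:Dglunifbdd} via the Neumann series and simultaneously shows $\rho(A+BC)\supset\C\setminus(\D\cup\set{\ep}_k)$, i.e. $\gs(A+BC)\subset\D\cup\set{\ep}_k$.

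Next I would prove $\set{\ep}_k\cap\gs_p(A+BC)=\varnothing$, i.e. each $\ep$ is not an eigenvalue of $A+BC$. Suppose $(A+BC)x=\ep x$ for some $x\neq0$; rewrite as $(\ep-A)x = BCx$, so $\BA_k x = \ep\inv BCx \in\ran(B)\subset\ran(\BA_k^\gb)$ — but I actually want to run the resolvent-type identity in reverse. The clean approach: since $\ep\in\gs_c(A)$ with $\overline{\ran(\ep-A)}=X$ and $\ker(\ep-A)=0$, and $BC$ is finite rank, one shows the perturbed operator still has trivial kernel at $\ep$ by approximating $\ep$ from outside $\overline\D$ along the ray $r\ep$, $r\downarrow1$: the uniform bound~\eqref{eq:Dglunifbdd} lets one pass to a limit in the identity for $R(r\ep,A+BC)$ applied to a putative eigenvector, contradicting $\norm{x}\neq0$; alternatively, $(A+BC)x=\ep x$ together with $\BA_k x = \ep\inv BCx$ and $Cx\in\Y$ finite-dimensional gives $x\in\ran(\BA_k^{-1}B\cdot(\text{finite-dim}))$, and then iterating via $Cx = C\BA_k\inv\ep\inv BCx = (CR(\ep,A)B\text{-type operator})Cx$ forces $Cx=0$ by the $<1$ bound, hence $\BA_k x=0$, hence $x=0$. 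So $\set{\ep}_k\subset\gs(A+BC)\setminus\gs_p(A+BC)$; that $\set{\ep}_k\subset\gs(A+BC)$ will follow because the spectrum cannot drop (e.g. the resolvent $R(\gl,A+BC)=R(\gl,A)+\cdots$ would blow up as $\gl\to\ep$ since $R(\gl,A)$ does and the correction is controlled).

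The main obstacle I expect is the bookkeeping for the fractional powers in the middle factor $\norm{\BA_k^\gg R(\gl,A)\BA_k^\gb}$ on $\Omega_k$: one must justify splitting the power $\ga$ across the two sides of the resolvent and handle the possibly-positive leftover power $\BA_k^{\gb+\gg-\ga}$, using commutativity of the $\BA_k$, uniform sectoriality, and the Moment Inequality, while keeping all constants independent of $k$ and of $\gl\in\Omega_k$. The separation $\eps_A\leq d_A/3$ is what makes the ``other'' factors $\BA_l^\ga$, $l\neq k$, harmless on $\Omega_k$. Once the uniform bound $\sup\norm{CR(\gl,A)B}\leq C\gd^{\min\set{\gb,\gg,1}}$ (or simply $\leq C\gd$ after using $\norm{\BA_k^{-\gb}B},\norm{(\BA_k^\ast)^{-\gg}C^\ast}<\gd$ with the middle factor absorbing everything) is in hand, the rest is the routine Neumann-series and eigenvalue argument above.
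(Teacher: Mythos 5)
Your treatment of the inclusion $\gs(A+BC)\subset\D\cup\set{\ep}_{k=1}^N$ and of the uniform bound~\eqref{eq:Dglunifbdd} is essentially the paper's argument: estimate $\norm{CR(\gl,A)B}$ on each $\Omega_k$ by factoring through $\BA_k^{-\gb}B$, the bounded extension of $C\BA_k^{-\gg}$, the uniformly bounded nonnegative power $\BA_k^{\gb+\gg-\ga}$ and the bound on $R(\gl,A)\BA_k^{\ga}$ (this is Lemma~\ref{lem:CRBbdd}), estimate it off $\bigcup_k\Omega_k$ via Lemma~\ref{lem:RbddOcomp}, and then apply Sherman--Morrison--Woodbury with a Neumann series. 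Two slips in that half are repairable but should be fixed: the operators $\BA_l^{\ga}$, $l\neq k$, are \emph{never} boundedly invertible (they do not depend on $\gl$, and $\ep[l]\in\gs(A)$), so you cannot recover a bound on $R(\gl,A)\BA_k^{\ga}$ from Corollary~\ref{cor:RBAbdd} by ``inverting'' them on $\Omega_k$ --- instead cite Theorem~\ref{thm:RBAbdd}, which gives exactly this bound; and the leftover factor in your splitting must carry the \emph{nonnegative} exponent $\gb+\gg-\ga$, i.e.\ $\BA_k^{\gb+\gg-\ga}$, not $\BA_k^{\ga-\gb-\gg}$.

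The genuine gap is the claim $\set{\ep}_k\subset\gs(A+BC)$. ``The spectrum cannot drop'' is not a valid principle here, and your blow-up argument fails: by your own estimates the correction term $R(\gl,A)B(1-CR(\gl,A)B)\inv CR(\gl,A)$ is only controlled by a quantity of order $\abs{\gl-\ep}^{-\ga}$, which is the \emph{same} order as $\norm{R(\gl,A)}$, so boundedness of $R(\gl,A+BC)$ near $\ep$ (i.e.\ cancellation between the two terms) is entirely consistent with your bounds and no contradiction follows. Your eigenvalue-exclusion sketch also needs repair: since $\gb\geq 1$ is not assumed, ``$C\BA_k^{-1}B$'' is not meaningful as written; one needs the splitting $\gb_1+\gg_1=1$ with $0\leq\gb_1\leq\gb$, $0\leq\gg_1\leq\gg$ (possible because $\gb+\gg\geq\ga\geq1$) and the bounded extension $C_{\gg_1}$ of $C\BA_k^{-\gg_1}$, which turns your iteration into $Cx=\epc C_{\gg_1}\BA_k^{-\gb_1}B\,Cx$ and then $Cx=0$ for small $\gd$. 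The paper settles both issues at once in Lemma~\ref{lem:ABCinj} via the factorization $\ep-A-BC=\ep\BA_k^{\gb_1}\bigl(1-\epc\BA_k^{-\gb_1}BC_{\gg_1}\bigr)\BA_k^{\gg_1}$: the middle factor is boundedly invertible for small $\gd$, while the outer factors are injective and at least one of them is not surjective, so $\ep\in\gs(A+BC)\setminus\gs_p(A+BC)$. If you want to keep your injectivity iteration, you must still supply the non-surjectivity separately, e.g.\ by noting $\ran(\ep-A-BC)\subset\ran(\BA_k^{\min\set{\gb,1}})\neq X$ when $\gb>0$ and arguing through the adjoint when $\gb=0$ (then $\gg\geq\ga\geq1$); as it stands this half of the statement is unproved.
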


The proof of Theorem~\ref{thm:specpert} is based on the following two lemmas.

\begin{lemma} 
  \label{lem:CRBbdd}
  Let Assumption~\textup{\ref{ass:Astandass}} be satisfied for some $\gb+\gg\geq \ga$ and let $Y$ be a Banach space.
There exists a constant $M_R\geq 1$ such that if
$B\in \Lin(Y,X)$ and $C\in \Lin(X,Y)$ satisfy $\ran(B)\subset \ran( \BA_k^{\gb})$ and $\ran(C^\ast)\subset \ran( (\BA_k^\ast)^\gg)$ for some $k$, then
\eq{
\norm{CR(\gl,A)B}\leq M_R \norm{\BA_k^{-\gb} B} \norm{(\BA_k^\ast)^{-\gg} C^\ast}
}
for all $\gl\in \Omega_k$.
\end{lemma}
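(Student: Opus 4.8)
The plan is to factor $CR(\gl,A)B$ through the fractional powers $\BA_k^\ga$ whose boundedness against the resolvent is guaranteed by Theorem~\ref{thm:RBAbdd}. Write $\gb+\gg\geq\ga$ and split the exponent $\ga$ as $\ga = \ga_1+\ga_2$ with $0\leq\ga_1\leq\gb$ and $0\leq\ga_2\leq\gg$; for instance one may take $\ga_1=\min\set{\gb,\ga}$ and $\ga_2=\ga-\ga_1\leq\gg$. The idea is then to insert $\BA_k^\ga = \BA_k^{\ga_1}\BA_k^{\ga_2}$ between $R(\gl,A)$ and to move $\BA_k^{\ga_1}$ onto $B$ and $\BA_k^{\ga_2}$ onto $C$ (using $(\BA_k^\ast)^{\ga_2}$ on $C^\ast$). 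Concretely, since $\BA_k$ commutes with $R(\gl,A)$ on the relevant ranges, one writes
\eq{
CR(\gl,A)B = \bigl((\BA_k^\ast)^{-\ga_2}C^\ast\bigr)^\ast \,\BA_k^{\ga_2}\,R(\gl,A)\,\BA_k^{\ga_1}\,\bigl(\BA_k^{-\ga_1}B\bigr),
}
which is legitimate because $\ran(B)\subset\ran(\BA_k^{\gb})\subset\ran(\BA_k^{\ga_1})$ and $\ran(C^\ast)\subset\ran((\BA_k^\ast)^\gg)\subset\ran((\BA_k^\ast)^{\ga_2})$, so $\BA_k^{-\ga_1}B\in\Lin(Y,X)$ and $(\BA_k^\ast)^{-\ga_2}C^\ast\in\Lin(X,Y)$ by the Closed Graph Theorem.

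The middle factor $\BA_k^{\ga_2}R(\gl,A)\BA_k^{\ga_1}$ is uniformly bounded on $\Omega_k$ by $M_1$ from Theorem~\ref{thm:RBAbdd} (rearranging $\BA_k^{\ga_2}R(\gl,A)\BA_k^{\ga_1}=R(\gl,A)\BA_k^\ga$ using commutativity), uniformly in $k$. For the two outer factors I would use the Moment Inequality (Lemma~\ref{lem:Momentineq}) to bound $\norm{\BA_k^{-\ga_1}B}$ in terms of $\norm{\BA_k^{-\gb}B}$ and $\norm{B}$, and similarly $\norm{(\BA_k^\ast)^{-\ga_2}C^\ast}$ in terms of $\norm{(\BA_k^\ast)^{-\gg}C^\ast}$ and $\norm{C^\ast}$; alternatively, since we only need \emph{some} constant $M_R$ depending on $B,C$, one can simply note $\BA_k^{-\ga_1}B=\BA_k^{\gb-\ga_1}(\BA_k^{-\gb}B)$ with $\BA_k^{\gb-\ga_1}$ uniformly bounded (as $0\leq\gb-\ga_1\leq\ga$) via~\cite[Prop. 3.1.1(a)]{Haa06book}, and likewise for the $C$ factor. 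Collecting constants gives
\eq{
\norm{CR(\gl,A)B}\leq M_1\,K^2\,\norm{\BA_k^{-\gb}B}\,\norm{(\BA_k^\ast)^{-\gg}C^\ast}
}
for all $\gl\in\Omega_k$, with $M_R := M_1 K^2$ independent of $B,C,\gl,k$ — though since the lemma allows $M_R$ to depend on nothing but the standing data, even a crude bound suffices.

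The main obstacle is the bookkeeping of fractional powers and their adjoints: justifying that $C^\ast\in\ran((\BA_k^\ast)^\gg)$ really lets one pull $(\BA_k^\ast)^{\ga_2}$ out with $\ga_2\leq\gg$, and that the formal manipulation $CR(\gl,A)B = ((\BA_k^\ast)^{-\ga_2}C^\ast)^\ast\BA_k^{\ga_2}R(\gl,A)\BA_k^{\ga_1}(\BA_k^{-\ga_1}B)$ is valid as an identity of bounded operators — this needs commutativity of the (possibly unbounded) fractional powers with $R(\gl,A)$ on appropriate domains, which follows since $R(\gl,A)$ commutes with $\BA_k$ and hence with its fractional powers, together with the fact that $\BA_k^{\ga_1}(\BA_k^{-\ga_1}B)=B$ on $\ran(B)$. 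Once the factorization is set up cleanly, the estimate is immediate from Theorem~\ref{thm:RBAbdd} and the uniform sectoriality of $(\BA_k)_k$.
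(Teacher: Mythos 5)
Your argument is correct and essentially identical to the paper's: the paper factorizes $CR(\gl,A)B = C\BA_k^{-\gg}\,R(\gl,A)\BA_k^{\ga}\,\BA_k^{\gb+\gg-\ga}\,\BA_k^{-\gb}B$, keeping the full powers $\gb,\gg$ on $B$ and $C$ and placing the excess $\gb+\gg-\ga$ as a uniformly bounded middle factor, whereas you split $\ga=\ga_1+\ga_2$ and absorb the excess into the outer factors $\BA_k^{\gb-\ga_1}$ and $(\BA_k^\ast)^{\gg-\ga_2}$ --- the same estimate through Theorem~\ref{thm:RBAbdd} either way. The only nit is that $\gb-\ga_1$ need not lie in $[0,\ga]$ (e.g.\ if $\gb>2\ga$), so invoke the uniform bound of \cite[Prop.~3.1.1(a)]{Haa06book} for exponents in $[0,\gb+\gg]$, as the paper does elsewhere.
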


\begin{proof}
  Since $\BA_k^\gb\in \Lin(X)$, the operators $\BA_k^{-\gb}$ and $\BA_k^{-\gb}B$ are closed. Since $\Dom( \BA_k^{-\gb} B)=Y$, the Closed Graph Theorem implies $\BA_k^{-\gb} B\in \Lin(Y,X)$. Similarly $(\BA_k^\ast)^{-\gg} C^\ast \in \Lin(Y,X)$ and $C\BA_k^\gg$ extends to a bounded operator $C_\gg\in \Lin(X,Y)$ with norm $\norm{C_\gg}\leq \norm{(\BA_k^\ast)^\gg C^\ast}$. 
Choose 
$M_k = \norm{(-A)^{\gb+\gg-\ga}} \cdot \sup_{\gl\in \Omega_k}\norm{R(\gl,A)\BA_k^{\ga}}$.
Then for all $\gl\in \Omega_k$ 
\eq{
\norm{CR(\gl,A)B}
&= \norm{C\BA_k^{-\gg} R(\gl,A)\BA_k^{\ga} \BA_k^{\gb+\gg-\ga} \BA_k^{-\gb} B} 
\leq \norm{C_\gg} \norm{R(\gl,A)\BA_k^{\ga}} \norm{ \BA_k^{\gb+\gg-\ga}} \norm{ \BA_k^{-\gb} B} \\
&\leq M_k \norm{\BA_k^{-\gb} B} \norm{(\BA_k)^{-\gg} C^\ast}.
} 
Finally, we can choose $M_R=\max \set{M_1,\ldots,M_N}$.
\end{proof}

\begin{lemma}
  \label{lem:ABCinj}
  Let Assumption~\textup{\ref{ass:Astandass}} be satisfied with $\gb+ \gg \geq \ga$.
  There exists $\gd_0>0$ such that if
  $\norm{\BA_k^{-\gb} B}<\gd_0$  and   $\norm{(\BA_k^\ast)^{-\gg}C^\ast}<\gd_0$
  for all $k$, then
  $\set{\ep}_{k}\subset \gs(A+BC)\setminus\gs_p(A + BC)$.
\end{lemma}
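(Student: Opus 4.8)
The plan is to show two things for each fixed $k\in\Igw$: first that $\ep\in\gs(A+BC)$ (i.e.\ $\ep$ is \emph{not} in the resolvent set), and second that $\ep\notin\gs_p(A+BC)$ (i.e.\ $\ep-(A+BC)$ is injective). For the injectivity, I would argue by contradiction. Suppose $x\in X$, $x\neq0$, satisfies $(A+BC)x=\ep x$, i.e.\ $\BA_k x = -\epc BCx$. Applying $\BA_k^{-\gb}$ (which is only densely defined, so one must be a little careful — but $\BA_k x\in\ran(\BA_k)$ and $BCx\in\ran(B)\subset\ran(\BA_k^\gb)$ by Assumption~\ref{ass:Astandass}, so both sides lie in the right domains) yields $\BA_k^{1-\gb}x = -\epc(\BA_k^{-\gb}B)Cx$. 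The idea is to bootstrap: because $\BA_k^{-\gb}B$ has small norm and $C$ is bounded, $\BA_k^{1-\gb}x$ is small relative to $x$; iterating the relation and using the Moment Inequality (Lemma~\ref{lem:Momentineq}) to interpolate between $\norm{x}$ and $\norm{\BA_k x}=\norm{\epc BCx}\le\norm{B}\norm{C}\norm{x}$ should force $x=0$ once $\gd_0$ is small enough. The cleanest route is probably: from $\BA_k x=-\epc BCx$ we get $x = -\epc\BA_k^{-1}BCx$ only if $x\in\ran(\BA_k)$, which need not hold, so instead work with $\BA_k^\gb x$: we have $\BA_k^{\gb}$ applied appropriately, and since $\gb+\gg\ge\ga\ge1$, one can split powers as in Lemma~\ref{lem:CRBbdd} to write $Cx$ in terms of $C\BA_k^\gg$ composed with $(\BA_k^\ast)^{-\gg}$-type bounded operators, obtaining an estimate $\norm{Cx}\le\mathrm{const}\cdot\gd_0\norm{Cx}$, hence $Cx=0$, hence $\BA_k x=0$, hence (by injectivity of $\BA_k$, which holds since $\gs_p(A)\cap\T=\varnothing$) $x=0$, a contradiction.

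For the statement $\ep\in\gs(A+BC)$, I would use the fact that $\ep\in\gs_c(A)$, so $\ran(\ep-A)$ is dense but not closed; equivalently $\BA_k$ is injective with unbounded inverse $\BA_k^{-1}$ defined on the non-closed dense subspace $\ran(\BA_k)$. Writing $\ep-(A+BC) = \BA_k\ep - BC = \ep(\BA_k - \epc BC) = \ep\,\BA_k(1-\epc\BA_k^{-1}BC)$ is formally suggestive but $\BA_k^{-1}$ is unbounded, so this factorization is not directly legitimate. Instead I would argue: if $\ep$ were in $\rho(A+BC)$, then $\ep-(A+BC)$ would be boundedly invertible, and writing $\ep-(A+BC)=(\ep-A)(1 - \epc R(\ep,A)BC)$ fails too since $R(\ep,A)$ does not exist. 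The correct approach is via approximation by points $\gl\to\ep$ from outside $\D$: by Theorem~\ref{thm:specpert}'s estimate \eqref{eq:Dglunifbdd} (or rather, the resolvent bound it will produce), $R(\gl,A+BC)$ would be uniformly bounded on a punctured neighbourhood of $\ep$ in $\C\setminus\overline{\D}$ if $\ep\in\rho(A+BC)$; but one shows that $\norm{R(\gl,A+BC)}$ must blow up as $\gl\to\ep$ by comparing with the blow-up of $R(\gl,A)$ — indeed if $A+BC$ had $\ep$ in its resolvent set, a perturbation-series argument (Neumann series in $BC\,R(\gl,A+BC)$, valid since $\norm{BC}$ is small) would give bounded $R(\gl,A)$ near $\ep$, contradicting the polynomial blow-up guaranteed by Assumption~\ref{ass:Astandass}\eqref{eq:Aresolventgrowthorder} together with the fact that $\ep\in\gs(A)$.

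The main obstacle I anticipate is the domain bookkeeping around the unbounded operators $\BA_k^{-\gb}$, $\BA_k^{-\gg}$, and $\BA_k^{-1}$: all the formal identities involving fractional powers and resolvents are only valid on suitable dense subspaces, and one must check that the vectors arising (an eigenvector $x$, or the range vectors $BCx$) actually lie in those subspaces — which is exactly what Assumption~\ref{ass:Astandass} part~2 is designed to guarantee. A secondary technical point is choosing $\gd_0$ uniformly in $k$; this follows because all the constants ($\norm{B}$, $\norm{C}$, the moment-inequality constants $M_{\tilde\theta}$, and the sectoriality bounds) can be taken independent of $k$ by Lemma~\ref{lem:Momentineq} and the uniform sectoriality of $(\BA_k)_k$, so one simply takes the minimum of the finitely many thresholds, one per $k$, though in fact a single threshold works. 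Once injectivity and non-invertibility are both established for every $k$, the conclusion $\set{\ep}_k\subset\gs(A+BC)\setminus\gs_p(A+BC)$ is immediate.
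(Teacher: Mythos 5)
Your injectivity half is essentially sound: from $(A+BC)x=\ep x$ one gets $\ep\BA_k x=BCx$, and since $BCx\in\ran(B)\subset\ran(\BA_k^{\gb_1})$ one may divide by $\BA_k^{\gb_1}$ (with $\gb_1+\gg_1=1$, $0\le\gb_1\le\gb$, $0\le\gg_1\le\gg$, possible because $\gb+\gg\ge\ga\ge1$) and apply the bounded extension $C_{\gg_1}$ of $C\BA_k^{-\gg_1}$ to obtain $\norm{Cx}\le\norm{C_{\gg_1}}\norm{\BA_k^{-\gb_1}B}\norm{Cx}$, whence $Cx=0$, $\BA_k x=0$, $x=0$; the uniform choice of $\gd_0$ then follows from the uniform bounds $\norm{\BA_k^{\gb-\gb_1}}, \norm{(\BA_k^\ast)^{\gg-\gg_1}}\le K$. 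This is exactly the injectivity content of the paper's argument, just unwound.

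The genuine gap is in your proof that $\ep\in\gs(A+BC)$. Your first route, invoking Theorem~\ref{thm:specpert} and the bound \eqref{eq:Dglunifbdd}, is circular: in the paper that theorem is \emph{proved using} the present lemma. Your fallback, the Neumann-series contradiction, does not close either: if $\ep\in\rho(A+BC)$, the identity $R(\gl,A)=R(\gl,A+BC)+R(\gl,A)BC\,R(\gl,A+BC)$ only yields boundedness of $R(\gl,A)$ near $\ep$ when $\norm{BC}\sup_{\gl}\norm{R(\gl,A+BC)}<1$; but $\sup_{\gl \text{ near }\ep}\norm{R(\gl,A+BC)}$ depends on the particular perturbation $BC$, so no uniform $\gd_0$ (which must be fixed \emph{before} $B,C$ are given) can guarantee this smallness --- for non-normal operators there is no bound on how large the perturbed resolvent can be even when $\norm{BC}$ is tiny. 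Moreover, you dismissed the direct factorization as illegitimate on the grounds that $\BA_k^{-1}$ is unbounded, but the correct factorization never uses $\BA_k^{-1}$: since $0\le\gb_1,\gg_1\le1$, the powers $\BA_k^{\gb_1}$ and $\BA_k^{\gg_1}$ are \emph{bounded} operators, $C_{\gg_1}\in\Lin(X,\Y)$ is bounded by Assumption~\ref{ass:Astandass}.2, and one has, as bounded operators on all of $X$,
\begin{equation*}
\ep-A-BC \;=\; \ep\,\BA_k^{\gb_1}\bigl(1-\epc\BA_k^{-\gb_1}BC_{\gg_1}\bigr)\BA_k^{\gg_1}.
\end{equation*}
For $\gd_0$ small the middle factor is boundedly invertible, while $\BA_k^{\gb_1}$ and $\BA_k^{\gg_1}$ are injective and cannot both be surjective (their product $\BA_k$ is not surjective, as $\ep\in\gs_c(A)$); hence $\ep-A-BC$ is injective but not surjective, giving both halves of the conclusion at once. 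Without this (or some substitute non-surjectivity argument), your proposal establishes only $\ep\notin\gs_p(A+BC)$, not $\ep\in\gs(A+BC)$.
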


\begin{proof}
  Choose $0\leq\gb_1\leq\gb$ and $0\leq \gg_1\leq\gg$ such that $\gb_1+\gg_1=1$. Let $k\in\Igw$ and assume $\norm{\BA_k^{-\gb_1}B}<1$ and $\norm{(\BA_k^\ast)^{-\gg_1}C^\ast}<1$. The condition $0\leq \gg_1\leq 1$ implies $\ran( \BA_k)\subset \ran( \BA_k^{\gg_1}) \subset X$, and thus $\overline{\Dom( \BA_k^{-\gg_1})} = X$ due to the fact that $\ep\in \gs_c( A)$. The operator $C\BA_k^{-\gg_1}$ has a unique bounded extension $C_{\gg_1}$ with norm $\norm{C_{\gg_1}}=\norm{(\BA_k^\ast)^{-\gg_1}C^\ast}<1$.

  Since $\norm{\ep\BA_k^{-\gb_1}B C_{\gg_1}}\leq\norm{\BA_k^{-\gb_1}B} \norm{ C_{\gg_1}}<1$, the operator $1 - \ep\BA_k^{-\gb_1}BC_{\gg_1}$ is boundedly invertible, and 
  \eq{
  \ep - A - BC
  = \ep \BA_k^{\gb_1} (1 - \epc\BA_k^{-\gb_1}BC_{\gg_1})\BA_k^{\gg_1}.
  } 
  Since $\BA_k^{\gb_1}$ and $\BA_k^{\gg_1}$ are injective and at least one of them is not surjective, the operator $\ep-A-BC$ is injective but not surjective. This implies $\ep \in \gs(A+BC)\setminus\gs_p(A+BC)$.

  Finally, by~\cite[Prop. 3.1.1(a)]{Haa06book} there exists $K>0$ such that $\norm{\BA_k^r }\leq K$ and $\norm{(\BA_k^\ast)^r }\leq K$ for all $0\leq r\leq \gb+\gg$ and $k$. This in particular implies $\norm{\BA_k^{\gb-\gb_1}}\leq K$ and $\norm{(\BA_k^\ast)^{\gg-\gg_1}}\leq K$ for all $k$, and thus $\norm{\BA_k^{-\gb_1}B} \leq K \norm{\BA_k^{-\gb}B}$ and $\norm{(\BA_k^\ast)^{-\gg_1}C^\ast} \leq K\norm{(\BA_k^\ast)^{-\gg}C^\ast}$. This concludes that $\norm{\BA_k^{-\gb_1}B}<1$ and $\norm{(\BA_k^\ast)^{-\gg_1}C^\ast}<1$ can be achieved by choosing a small enough $\gd_0>0$.  \end{proof}

\noindent\textit{Proof of Theorem~\textup{\ref{thm:specpert}}.}
Let $\gb+\gg\geq \ga$.
  By~\cite[Prop. 3.1.1(a)]{Haa06book} there exists 
  $K>0$ such that
$\norm{\BA_k^r }\leq K$ and $\norm{(\BA_k^\ast)^r }\leq K$
for all $0\leq r\leq \gb+\gg$ and $k$. 
We therefore have
  $\norm{B}
 \leq K \norm{\BA_k^{-\gb}B}$
 and
  $\norm{C}
  \leq K\norm{(\BA_k^\ast)^{-\gg}C^\ast}$.  
Lemmas~\ref{lem:RbddOcomp}, \ref{lem:CRBbdd}, and~\ref{lem:ABCinj} now imply that it is possible to choose $\gd>0$ in such a way that if 
  \eq{
\norm{\BA_k^{-\gb} B}<\gd, \quad \mbox{and} \quad  \norm{(\BA_k^\ast)^{-\gg}C^\ast}<\gd,
  }
  for all $k$, then $\norm{CR(\gl,A)B}\leq c <1$ for every $\gl\notin  \D \cup \set{\ep}_k  $, and 
  $\set{\ep}_k\subset \gs(A+BC)\setminus \gs_p(A+BC)$.
The Sherman--Morrison--Woodbury formula
  \eqn{
  \label{eq:SheMor}
  R(\gl,A+BC) = R(\gl,A)+ R(\gl,A) B(1- CR(\gl,A)B)\inv C R(\gl,A)
  }
  now implies that $\gs(A+BC)\subset \D \cup \set{\ep}_k$. Moreover, a standard Neumann series argument shows that 
  $\norm{(1-CR(\gl,A)B)\inv} \leq 1/(1-c)$ for every $\gl\notin \D\cup \set{\ep}_k$, which in turn concludes that~\eqref{eq:Dglunifbdd} is satisfied. 
\hfill\qed

The following theorem characterizes power boundedness of a discrete semigroup on a Hilbert space~\cite{Eis10book}.

\begin{theorem}
  \label{thm:unifbddconds}
  Let $A\in\Lin(X)$ on a Hilbert space $X$ be such that $\gs(A)\subset \overline{\D}$. The discrete semigroup $\Asg$ is power bounded if and only if 
      for all $x,y\in X$ 
      \eq{
      \sup_{1<r\leq 2}\, (r-1) \int_0^{2\pi} \left( \norm{R(r\ep[],A)x}^2+ \norm{R(r\ep[],A)^\ast y}^2 \right) d\varphi <\infty.
      } 
\end{theorem}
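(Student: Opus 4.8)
The plan is to prove the two implications separately; both reduce to Parseval's identity on the circle applied to the Neumann expansion $R(z,A)=\sum_{n\geq 0}A^n z^{-n-1}$, which converges in operator norm for $\abs{z}>1$ since $\gs(A)\subset\overline{\D}$.

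For \emph{necessity} I would assume $\norm{A^n}\leq M$ for all $n$, fix $1<r\leq 2$ and $x\in X$, insert the Neumann series into $R(r\ep[],A)x$, and use that $\setm{e^{-i(n+1)\varphi}}{n\geq 0}$ is orthogonal in $L^2(0,2\pi)$ with squared norm $2\pi$; Parseval then gives
\[
\int_0^{2\pi}\norm{R(r\ep[],A)x}^2\,d\varphi=2\pi\sum_{n\geq 0}r^{-2(n+1)}\norm{A^n x}^2\leq \frac{2\pi M^2\norm{x}^2}{r^2-1},
\]
so multiplying by $r-1$ yields $2\pi M^2\norm{x}^2/(r+1)\leq 2\pi M^2\norm{x}^2$, uniformly in $r$. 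Since $A^\ast$ is power bounded by the same $M$ and $R(r\ep[],A)^\ast=R(re^{-i\varphi},A^\ast)$, a change of variables gives the same estimate for the second term, which finishes this direction.

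For \emph{sufficiency} I would assume the supremum is finite for all $x,y$ (taking $y=0$, resp.\ $x=0$, separates the two terms) and first upgrade it to a bound uniform in $x$. Since the resolvent is norm-continuous, hence bounded, on the compact circle $\abs{z}=r$, the map $x\mapsto (r-1)\int_0^{2\pi}\norm{R(r\ep[],A)x}^2\,d\varphi$ is a bounded nonnegative quadratic form, represented by a positive $Q_r\in\Lin(X)$ with $\norm{Q_r}=\sup_{\norm{x}\leq 1}\iprod{Q_r x}{x}$; the hypothesis gives $\sup_r\iprod{Q_r x}{x}<\infty$ pointwise, so the uniform boundedness principle yields $C^2:=\sup_{1<r\leq2}\norm{Q_r}<\infty$, and likewise a finite bound for the adjoint family, so that $\int_0^{2\pi}\norm{R(r\ep[],A)x}^2\,d\varphi\leq C^2\norm{x}^2/(r-1)$ and similarly with $R(r\ep[],A)^\ast$. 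Next I would recover $A^n$ from the \emph{square} of the resolvent, since that is what supplies a cancelling factor $n+1$: differentiating the Neumann series gives $R(z,A)^2=-\tfrac{d}{dz}R(z,A)=\sum_{n\geq0}(n+1)A^n z^{-n-2}$ for $\abs{z}>1$, whence for any $1<r\leq 2$
\[
A^n=\frac{r^{n+2}}{2\pi(n+1)}\int_0^{2\pi}e^{i(n+2)\varphi}\,R(r\ep[],A)^2\,d\varphi .
\]
Writing $\iprod{R(r\ep[],A)^2x}{y}=\iprod{R(r\ep[],A)x}{R(r\ep[],A)^\ast y}$ and applying Cauchy--Schwarz in $L^2(0,2\pi)$ with the two uniform bounds gives
\eq{
\abs{\iprod{A^n x}{y}}
&\leq\frac{r^{n+2}}{2\pi(n+1)}\Big(\int_0^{2\pi}\norm{R(r\ep[],A)x}^2 d\varphi\Big)^{1/2}\Big(\int_0^{2\pi}\norm{R(r\ep[],A)^\ast y}^2 d\varphi\Big)^{1/2}\\
&\leq\frac{r^{n+2}C^2\norm{x}\,\norm{y}}{2\pi(n+1)(r-1)},
}
and choosing $r=1+1/(n+1)\in(1,2]$ makes $(n+1)(r-1)=1$ and $r^{n+2}\leq e^{(n+2)/(n+1)}\leq e^2$, so $\abs{\iprod{A^n x}{y}}\leq e^2 C^2\norm{x}\norm{y}/(2\pi)$ and $\sup_n\norm{A^n}\leq e^2 C^2/(2\pi)<\infty$.

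The real content lies in two places. The first is the passage from the pointwise hypothesis to the $x$-uniform operator bound, where one must check that each $Q_r$ is genuinely a bounded operator (via norm-continuity of the resolvent on $\abs{z}=r$) before invoking uniform boundedness. The second, and the crux, is the choice of integral representation: the first-power resolvent only yields $\norm{A^n x}=O(\sqrt{n})$, and it is the squared resolvent together with the optimisation $r=1+1/(n+1)$ that balances $r^{n+2}$, $1/(n+1)$ and $1/(r-1)$ into a constant independent of $n$. The remaining items — the Parseval computations, the adjoint change of variables, and term-by-term differentiation of the normally convergent Neumann series — are routine.
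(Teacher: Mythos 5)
Your proposal is correct. Note that the paper itself gives no proof of this statement: it is quoted directly from Eisner's book~\cite{Eis10book} (the discrete analogue of the Gomilko--Shi--Feng resolvent characterization of power boundedness on Hilbert spaces), so there is no internal argument to compare against. Your self-contained proof is essentially the standard one behind that citation: necessity via the Neumann series $R(z,A)=\sum_{n\geq0}A^nz^{-n-1}$ (convergent for $\abs{z}>1$ because $\gs(A)\subset\overline{\D}$) and vector-valued Parseval; sufficiency via the representation of $A^n$ as a Fourier coefficient of the squared resolvent $R(z,A)^2=\sum_{n\geq0}(n+1)A^nz^{-n-2}$, Cauchy--Schwarz against the two $L^2$ bounds, and the optimization $r=1+1/(n+1)$, which is exactly the device that makes the estimate uniform in $n$. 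Two small points you should make explicit: (i) the uniform-boundedness step concerns quadratic forms, so one should polarize first --- pointwise boundedness of $\iprod{Q_rx}{x}$ gives pointwise boundedness of $\iprod{Q_rx}{y}$ for each fixed pair, and then Banach--Steinhaus (together with $\norm{Q_r}=\sup_{\norm{x}\leq1}\iprod{Q_rx}{x}$ for the positive self-adjoint $Q_r$) yields $\sup_{1<r\leq2}\norm{Q_r}<\infty$; (ii) the constant $C^2$ used in the final estimate should be taken as the maximum of the uniform bounds for the family $Q_r$ and for the corresponding adjoint family. Both are routine repairs and do not affect the validity of the argument.
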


\begin{lemma}
  \label{lem:BCfinrankint}
  If $\tilde{B}\in \Lin(\C^p,X)$, then 
  \eq{
  &\sup_{1<r\leq 2} \; (r-1)\int_0^{2\pi} \norm{R(r\ep[],A)\tilde{B}}^2 d\varphi <\infty,
  \quad
  &\sup_{1<r\leq 2} \; (r-1)\int_0^{2\pi}  \norm{ R(r\ep[],A)^\ast \tilde{B}}^2 d\varphi<\infty.
  } 
\end{lemma}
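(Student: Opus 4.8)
The plan is to reduce the two operator-norm integrals to finitely many scalar integrals of exactly the type appearing in Theorem~\ref{thm:unifbddconds}, and then to invoke power boundedness of the unperturbed semigroup.

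First I would record that $\Asg$ is power bounded: strong stability (Assumption~\ref{ass:Astandass}) gives $A^n x\to 0$ for every $x\in X$, so by the uniform boundedness principle $\sup_{n\in\N}\norm{A^n}<\infty$, and in particular $\gs(A)\subset\overline{\D}$. Theorem~\ref{thm:unifbddconds} then applies, and since its two integrands are nonnegative it yields, for every fixed $x,y\in X$,
\[
\sup_{1<r\le 2}(r-1)\int_0^{2\pi}\norm{R(r\ep[],A)x}^2\,d\varphi<\infty
\quad\text{and}\quad
\sup_{1<r\le 2}(r-1)\int_0^{2\pi}\norm{R(r\ep[],A)^\ast y}^2\,d\varphi<\infty.
\]

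Next I would carry out the finite-rank reduction. Let $\{e_j\}_{j=1}^p$ be the standard basis of $\C^p$ and set $b_j:=\tilde B e_j\in X$. For $u=(u_j)_{j=1}^p\in\C^p$ with $\norm{u}\le 1$ and any $\gl\in\rho(A)$, writing $\tilde B u=\sum_{j=1}^p u_j b_j$ and using the Cauchy--Schwarz inequality gives $\norm{R(\gl,A)\tilde B u}\le\bigl(\sum_{j=1}^p\norm{R(\gl,A)b_j}^2\bigr)^{1/2}$, hence $\norm{R(\gl,A)\tilde B}^2\le\sum_{j=1}^p\norm{R(\gl,A)b_j}^2$; the identical argument with $R(\gl,A)^\ast$ in place of $R(\gl,A)$ gives $\norm{R(\gl,A)^\ast\tilde B}^2\le\sum_{j=1}^p\norm{R(\gl,A)^\ast b_j}^2$.

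Finally I would combine the two steps: for every $1<r\le 2$,
\[
(r-1)\int_0^{2\pi}\norm{R(r\ep[],A)\tilde B}^2\,d\varphi\le\sum_{j=1}^p(r-1)\int_0^{2\pi}\norm{R(r\ep[],A)b_j}^2\,d\varphi,
\]
and each summand on the right is bounded uniformly in $r$ by the first estimate recorded above applied with $x=b_j$; taking the supremum over $r$ establishes the first assertion of the lemma, and the second follows in the same way using $y=b_j$ and the adjoint estimate of the reduction step. There is no genuinely hard step here; the only point needing a little care is the passage from the operator norm $\norm{R(\gl,A)\tilde B}$ to the vector norms $\norm{R(\gl,A)b_j}$ with a constant independent of $\gl$ and $r$, which is precisely what the Cauchy--Schwarz bound above supplies.
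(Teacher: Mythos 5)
Your proof is correct and follows essentially the same route as the paper: reduce $\norm{R(\gl,A)\tilde B}^2$ and $\norm{R(\gl,A)^\ast\tilde B}^2$ to the finite sums $\sum_j\norm{R(\gl,A)b_j}^2$ and $\sum_j\norm{R(\gl,A)^\ast b_j}^2$ with $b_j=\tilde B e_j$, and then invoke the vector-wise resolvent integral bounds coming from power boundedness of $\Asg$ via Theorem~\ref{thm:unifbddconds}. The paper's proof is just a terser version of the same argument, leaving the Cauchy--Schwarz step and the appeal to power boundedness implicit.
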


\begin{proof}
  The claim follows directly from the fact that there exist $\set{b_j}_{j=1}^p\subset X$ such that $\tilde{B}u= \sum_{j=1}^p u_jb_j $ for $u\in \C^p$, and for any $R\in \Lin(X)$ we have
  $\norm{R \tilde{B}}^2 \leq  \sum_{j=1}^p\, \norm{R b_j}^2$ .
\end{proof}

\begin{lemma} 
  \label{lem:RBCRest}
Let Assumption~\textup{\ref{ass:Astandass}} be satisfied for some $\gb+\gg\geq \ga$ and let $k\in\Igw$.
  There exists a function $f_k: \C\setminus \left( \D\cup \set{\ep[l]}_{l=1}^N \right)\rightarrow \R^+$ such that
  \eq{
  \norm{R(\gl,A)B}\norm{CR(\gl,A)}\leq f_k(\gl) \qquad \forall \gl\in\Omega_k,
  }
   and $f_k(\cdot)$ has the properties $\sup_{0<\abs{\varphi-\varphi_k}\leq \eps_A}\abs{\varphi-\varphi_k}^\ga f_k(\ep[])<\infty$ and
  \eqn{
  \label{eq:fkint}
  \sup_{1<r\leq 2} \; (r-1)\int_0^{2\pi} f_k(r\ep[])^2 d\varphi<\infty.
  }
\end{lemma}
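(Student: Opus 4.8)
The plan is to dominate $\norm{R(\gl,A)B}\norm{CR(\gl,A)}$ on $\Omega_k$ \emph{not} by the pointwise bound $\abs{\gl-\ep}^{-\ga}$ coming from Lemma~\ref{lem:Okbdd} — which grows too fast for the integral condition when $\ga>1$ — but by a product of suitable fractional powers of the norms of two \emph{finite-rank} resolvent quantities, to which Lemma~\ref{lem:BCfinrankint} and Hölder's inequality then apply. Concretely, put $\phi:=\BA_k^{-\gb}B$ and $\psi:=(\BA_k^\ast)^{-\gg}C^\ast$; by the second part of Assumption~\ref{ass:Astandass} and the Closed Graph Theorem these belong to $\Lin(\Y,X)$ and are of finite rank, and $R(\gl,A)B=\BA_k^{\gb}R(\gl,A)\phi$, $\bigl(CR(\gl,A)\bigr)^\ast=(\BA_k^\ast)^{\gg}R(\gl,A)^\ast\psi$ by commutativity of $\BA_k$ with $R(\gl,A)$.

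Assume first $\gb,\gg>0$. Since $\gb+\gg\geq\ga$ and $\norm{R(\gl,A)\BA_k^\ga}\leq M_1$ for $\gl\in\Omega_k$ by Theorem~\ref{thm:RBAbdd}, writing $\BA_k^{\gb+\gg}R(\gl,A)=\BA_k^{\gb+\gg-\ga}R(\gl,A)\BA_k^{\ga}$ and using $K:=\sup_{0\le s\le\gb+\gg}\norm{\BA_k^s}<\infty$ from uniform sectoriality~\cite[Prop. 3.1.1(a)]{Haa06book}, we get $\norm{\BA_k^{\gb+\gg}R(\gl,A)}\leq KM_1$ and likewise $\norm{(\BA_k^\ast)^{\gb+\gg}R(\gl,A)^\ast}\leq KM_1$ on $\Omega_k$. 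Applying the Moment Inequality of Lemma~\ref{lem:Momentineq} with $\tilde\theta=\gb$, $\theta=\gb+\gg$ to the vectors $R(\gl,A)\phi u$ (and with $\tilde\theta=\gg$, $\theta=\gb+\gg$ to $R(\gl,A)^\ast\psi u$) yields constants $C_1,C_2\geq 1$ such that, for all $\gl\in\Omega_k$,
\eq{
\norm{R(\gl,A)B}\leq C_1\norm{R(\gl,A)\phi}^{\gg/(\gb+\gg)},
\qquad
\norm{CR(\gl,A)}\leq C_2\norm{R(\gl,A)^\ast\psi}^{\gb/(\gb+\gg)}.
}
If $\gb=0$ then $\gg\geq\ga$, so $\norm{CR(\gl,A)}=\norm{(\BA_k^\ast)^{\gg-\ga}R(\gl,A)^\ast(\BA_k^\ast)^\ga\psi}\leq KM_1\norm{\psi}$ is bounded while $R(\gl,A)B=R(\gl,A)\phi$; the case $\gg=0$ is symmetric.

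Now define, for $\gl\in\C\setminus\bigl(\D\cup\set{\ep[l]}_{l=1}^N\bigr)\subset\rho(A)$,
\eq{
f_k(\gl):=C_1C_2\,\norm{R(\gl,A)\phi}^{\gg/(\gb+\gg)}\,\norm{R(\gl,A)^\ast\psi}^{\gb/(\gb+\gg)}
}
(and $f_k(\gl):=KM_1\norm{\psi}\,\norm{R(\gl,A)B}$ if $\gb=0$, symmetrically if $\gg=0$), which is finite on that set; by the above, $\norm{R(\gl,A)B}\norm{CR(\gl,A)}\leq f_k(\gl)$ for all $\gl\in\Omega_k$. For the first required property, if $0<\abs{\varphi-\varphi_k}\leq\eps_A$ then $\ep[]\in\Omega_k$, so $\norm{R(\ep[],A)}\leq M_0\abs{\ep[]-\ep}^{-\ga}$ by Lemma~\ref{lem:Okbdd} and, since $\abs{\ep[]-\ep}\geq(2/\pi)\abs{\varphi-\varphi_k}$, $f_k(\ep[])\leq C\abs{\varphi-\varphi_k}^{-\ga}$, whence $\sup_{0<\abs{\varphi-\varphi_k}\leq\eps_A}\abs{\varphi-\varphi_k}^\ga f_k(\ep[])<\infty$. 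For the integral property, Hölder's inequality with the conjugate exponents $(\gb+\gg)/\gg$, $(\gb+\gg)/\gb$ together with Lemma~\ref{lem:BCfinrankint} gives, for $1<r\leq 2$,
\eq{
\int_0^{2\pi} f_k(r\ep[])^2\,d\varphi
\leq (C_1C_2)^2\Bigl(\int_0^{2\pi}\norm{R(r\ep[],A)\phi}^2 d\varphi\Bigr)^{\gg/(\gb+\gg)}\Bigl(\int_0^{2\pi}\norm{R(r\ep[],A)^\ast\psi}^2 d\varphi\Bigr)^{\gb/(\gb+\gg)}\leq \frac{M'}{r-1},
}
so $\sup_{1<r\leq2}(r-1)\int_0^{2\pi}f_k(r\ep[])^2 d\varphi<\infty$ (in the degenerate cases this is immediate from Lemma~\ref{lem:BCfinrankint}).

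The step carrying the whole argument is the interpolation estimate $\norm{R(\gl,A)B}\leq C_1\norm{R(\gl,A)\phi}^{\gg/(\gb+\gg)}$: the essential point is that the hypothesis $\gb+\gg\geq\ga$ makes $\BA_k^{\gb+\gg}R(\gl,A)$ bounded on $\Omega_k$, so the Moment Inequality lets one trade the badly behaved factor $\BA_k^\gb$ for a fractional power of the well-behaved finite-rank norm $\norm{R(\gl,A)\phi}$; without this, the naive product bound would only yield $\abs{\gl-\ep}^{-\ga}$, which fails the integral condition for $\ga>1$. Everything after that is a routine combination of Lemmas~\ref{lem:Okbdd} and~\ref{lem:BCfinrankint} with Hölder's inequality.
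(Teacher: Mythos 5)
Your proof is correct and is essentially the paper's own argument: you use the Moment Inequality (Lemma~\ref{lem:Momentineq}) together with the boundedness of $R(\gl,A)\BA_k^{\ga}$ on $\Omega_k$ (Theorem~\ref{thm:RBAbdd}) to dominate $\norm{R(\gl,A)B}\norm{CR(\gl,A)}$ by a product of powers of finite-rank resolvent norms whose exponents sum to one, and then conclude with H\"older's inequality and Lemma~\ref{lem:BCfinrankint}, plus the pointwise resolvent bound for the first property. The only deviation is minor: the paper chooses $\gb_1+\gg_1=\ga$ and interpolates at level $\ga$ using $\BA_k^{-\gb_1}B$ and $(\BA_k^\ast)^{-\gg_1}C^\ast$, whereas you interpolate at level $\gb+\gg$ with the fully shifted operators $\BA_k^{-\gb}B$, $(\BA_k^\ast)^{-\gg}C^\ast$ and absorb the excess power $\BA_k^{\gb+\gg-\ga}$ into a bounded factor, which is why you need (and correctly handle) the degenerate cases $\gb=0$ or $\gg=0$.
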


\begin{proof}
Choose $0\leq \gb_1\leq \gb$ and $0\leq \gg_1\leq \gg$ such that $\gb_1+\gg_1=\ga$.
  For brevity, denote $R_\gl = R(\gl,A)$ and $\gl_k=\gl-\ep$. 
  Moreover, denote $B_{\gb_1} = \BA_k^{-\gb_1} B\in \Lin(\Y,X)$ and $\tilde{C}_{\gg_1}= (\BA_k^\ast)^{-\gg_1}C^\ast\in \Lin(\Y,X)$. 
  
Let $M_1\geq 1$ be as in Theorem~\ref{thm:RBAbdd}. By Lemma~\ref{lem:Momentineq} there exist constants $M_{\gb_1},M_{\gg_1}\geq 1$ such that 
 for every $\gl\in \Omega_k$ we have
  \eq{
  \norm{R_\gl B  }
  &=\norm{\BA_k^{\gb_1} R_\gl   B_{\gb_1} }
  \leq M_{\gb_1} \norm{R_\gl   B_{\gb_1} }^{1-\gb_1/\ga} \norm{ \BA_k^\ga R_\gl   B_{\gb_1} }^{\gb_1/\ga}\\
  &\leq M_{\gb_1} \norm{R_\gl   B_{\gb_1} }^{1-\gb_1/\ga} \norm{\BA_k^\ga  R_\gl}^{\gb_1/\ga} \norm{   B_{\gb_1} }^{\gb_1/\ga}\\
  \norm{R_\gl^\ast C^\ast  }
  &=\norm{(\BA_k^\ast)^{\gg_1} R_\gl^\ast   \tilde{C}_{\gg_1} }
  \leq M_{\gg_1} \norm{ R_\gl^\ast   \tilde{C}_{\gg_1} }^{1-\gg_1/\ga} \norm{ (\BA_k^\ast)^\ga R_\gl^\ast   \tilde{C}_{\gg_1} }^{\gg_1/\ga}\\
  &\leq M_{\gg_1} \norm{ R_\gl^\ast   \tilde{C}_{\gg_1} }^{1-\gg_1/\ga} \norm{R_\gl\BA_k^\ga}^{\gg_1/\ga} \norm{\tilde{C}_{\gg_1} }^{\gg_1/\ga}.
  }
  Therefore 
  for $K = M_{\gb_1} M_{\gg_1} M_1\norm{B_{\gb_1} }^{\gb_1/\ga} \norm{\tilde{C}_{\gg_1} }^{\gg_1/\ga}$ we have
  \eq{
  \norm{R_\gl B} \norm{CR_\gl} 
  & \leq
  K \norm{R_\gl B_{\gb_1}}^{1-\gb_1/\ga} \norm{R_\gl^\ast \tilde{C}_{\gg_1}}^{1-\gg_1/\ga}.
  } 
  Define $f_k(\cdot)$ by $f_k(\gl) = K \norm{R_\gl B_{\gb_1}}^{1-\gb_1/\ga} \norm{R_\gl^\ast \tilde{C}_{\gg_1}}^{1-\gg_1/\ga}$ for all $\gl\in \C\setminus \left(\D \cup \set{\ep[l]}_{l=1}^N \right)$. We will now show that $f_k(\cdot)$ has the desired properties. 

  Since  $1-\gb_1/\ga + 1-\gg_1/\ga=1$, for all $\varphi\in[0,2\pi]$ with $0<\abs{\varphi-\varphi_k}\leq \eps_A$ we have from Assumption~\ref{ass:Astandass} that
  \eq{
 \abs{\varphi-\varphi_k}^\ga f_k(\ep[]) 
  &\leq \abs{\varphi-\varphi_k}^\ga  \norm{R(\ep[],A)} K \norm{B_{\gb_1}}^{1-\gb_1/\ga} \norm{ \tilde{C}_{\gg_1}}^{1-\gg_1/\ga}\\
&\leq  M_A K \norm{B_{\gb_1}}^{1-\gb_1/\ga} \norm{ \tilde{C}_{\gg_1}}^{1-\gg_1/\ga}.
  }
  This concludes that $\sup_{0<\abs{\varphi-\varphi_k}\leq \eps_A}\abs{\varphi-\varphi_k}^\ga f_k(\ep[])<\infty$.

  Moreover, if we denote $q=1/(1-\gb_1/\ga)$, $q' = 1/(1-\gg_1/\ga)$, then $1/q+1/q'=1$ and the H\"older inequality implies
      \eq{
      \int_0^{2\pi} f_k(r\ep[])^2 d\varphi
&=K^2 \int_0^{2\pi}\norm{ R(r\ep[],A) B_{\gb_1}}^{\frac{2}{q}} \norm{ R(r\ep[],A)^\ast \tilde{C}_{\gg_1}}^{\frac{2}{q'}} d\varphi\\
      &\leq K^2 \left(\int_0^{2\pi} \norm{ R(r\ep[],A) B_{\gb_1}}^2 d\varphi \right)^\frac{1}{q}
      \left(\int_0^{2\pi} \norm{ R(r\ep[],A)^\ast \tilde{C}_{\gg_1}}^2 d\eta \right)^\frac{1}{q'}
      }
      which immediately implies~\eqref{eq:fkint} by Lemma~\ref{lem:BCfinrankint}.
\end{proof}

\noindent\textit{Proof of Theorem~\textup{\ref{thm:stabpert}}.}
  Let $\gd>0$ be chosen as in Theorem~\ref{thm:specpert} and assume
  $\norm{\BA_k^{-\gb}B}<\gd$, and $\norm{(\BA_k^\ast)^{-\gg}C^\ast}<\gd$ for all $k$. 
  By Theorem~\ref{thm:specpert} there exists $M_D\geq 1$ such that we have $\norm{(1-CR(\gl,A)B)\inv}\leq M_D$ for all $\gl\notin \D\cup \set{\ep}_{k=1}^N$. 
  We begin the proof by showing that the semigroup $\Asg[(A+BC)]$ is power bounded.
  
Let $x\in X$ and
for brevity denote $R_\gl = R(r\ep[],A)$ and $D_\gl = 1-CR(r\ep[],A)B$. 
Using the Sherman--Morrison--Woodbury formula~\eqref{eq:SheMor} and the scalar inequality $(a+b)^2\leq 2 (a^2+b^2)$ for $a,b\geq 0$
we can estimate
\eq{
\MoveEqLeft \int_0^{2\pi} \norm{R(r\ep[],A+BC)x}^2 d\varphi
= \int_0^{2\pi} \norm{R_\gl x + R_\gl BD_\gl\inv CR_\gl x}^2 d\varphi\\
&\leq 2\int_0^{2\pi} \norm{R_\gl x}^2 d\varphi + 2M_D^2 \norm{x}^2 \int_0^{2\pi}
 \norm{R_\gl B}^2  \norm{ CR_\gl}^2 d\varphi .
}
Similarly, using $\norm{(R_\gl BD_\gl\inv CR_\gl)^\ast} = \norm{R_\gl BD_\gl\inv CR_\gl}\leq M_D\norm{R_\gl B}\norm{CR_\gl}$ we get
\eq{
\MoveEqLeft \int_0^{2\pi} \norm{R(r\ep[],A+BC)^\ast x}^2 d\varphi
= \int_0^{2\pi}
 \norm{R_\gl^\ast x + (R_\gl BD_\gl\inv CR_\gl)^\ast x}^2 d\varphi\\
&\leq 2\int_0^{2\pi}
 \norm{R_\gl^\ast x}^2 d\varphi + 2M_D^2 \norm{x}^2 \int_0^{2\pi} \norm{R_\gl B}^2  \norm{ CR_\gl}^2 d\varphi .
}
The above estimates together with 
 Theorem~\ref{thm:unifbddconds} imply that the semigroup generated by $A+BC$ is uniformly bounded if 
\eqn{
\label{eq:stabpertRBCRint}
\sup_{1<r\leq 2} \; (r-1)\int_0^{2\pi} \norm{R_\gl B}^2  \norm{ CR_\gl}^2 d\varphi 
<\infty.
}

For all $k\in\Igw$ let $f_k(\cdot)$ be the functions in Lemma~\ref{lem:RBCRest}. By Lemma~\ref{lem:RbddOcomp} we can choose $M_2\geq 1$ such that $\norm{R(\gl,A)}\leq M_2$ for all $\gl\notin \D\cup \bigcup_k \Omega_k $.
Let $1<r\leq 2$. For each $k\in\Igw$ denote by $E_k^r\subset [0,2\pi]$ the interval such that $r\ep[] \in\Omega_k$ if and only if $\varphi\in E_k^r$. Finally, denote $E^r = [0,2\pi]\setminus \left( \bigcup_k E_k^r \right)$. Now
\eq{
\int_0^{2\pi} \norm{R_\gl B}^2  \norm{ CR_\gl}^2 d\varphi 
&=  \int_{E^r} \norm{R_\gl B}^2  \norm{ CR_\gl}^2 d\varphi 
+ \sum_{k=1}^N  \int_{E_k^r} \norm{R_\gl B}^2  \norm{ CR_\gl}^2 d\varphi \\
&\leq \int_{E^r}  M_2^2\norm{ B}^2 \norm{ C}^2  M_2^2d\varphi 
+ \sum_{k=1}^N  \int_{E_k^r} f_k(r\ep[])^2 d\varphi \\
&\leq 2\pi M_2^4 \norm{B}^2\norm{C}^2   
+ \sum_{k=1}^N  \int_0^{2\pi} f_k(r\ep[])^2 d\varphi ,
}
which immediately implies~\eqref{eq:stabpertRBCRint} 
by Lemmas~\ref{lem:BCfinrankint}~and~\ref{lem:RBCRest}, and therefore the semigroup $\Asg[(A+BC)]$ is power bounded.

Since the perturbed semigroup is power bounded and $X$ is a Hilbert space, 
Theorem 2.9 and Corollary 2.11 in~\cite{Eis10book}
imply
that $\gs(A+BC)\cap \T\subset \gs_p(A+BC)\cup \gs_c(A+BC)$. However, by Theorem~\ref{thm:specpert} we have that
$\set{\ep}_k\subset\gs(A+BC)\setminus\gs_p(A+BC) $.
Together these properties conclude that $\ep\in \gs_c(A+BC)$ for all $k$.

Theorem~\ref{thm:specpert} shows that $\gs(A+BC)\cap \T = \set{\ep}_{k=1}^N$ is finite and $\gs_p(A+BC)\cap \T = \varnothing$. The discrete Arent--Batty--Lyubich--V\~{u} Theorem~\cite[Thm. 2.18]{Eis10book} therefore concludes that the semigroup $\Asg[(A+BC)]$ is strongly stable.

It remains to show that for all $k$ we have 
\eqn{
\label{eq:stabpertgrowthest}
\sup_{0<\abs{\varphi-\varphi_k}\leq \eps_A} \abs{\varphi-\varphi_k}^\ga \norm{R(\ep[],A+BC)}<\infty.
} 
Let $k$ be arbitary. By Lemma~\ref{lem:RBCRest} there exists $M_k\geq 1$ such that $\abs{\varphi-\varphi_k}^\ga f_k(\ep[])\leq M_k$ whenever $0<\abs{\varphi-\varphi_k}\leq\eps_A$. 
The Sherman--Morrison--Woodbury formula~\eqref{eq:SheMor} implies that for all $\varphi\in[0,2\pi]$ satisfying $0<\abs{\varphi-\varphi_k}\leq \eps_A$ we have
\eq{
\MoveEqLeft\norm{R(\ep[],A+BC)}
\leq\norm{R(\ep[],A)}  + \norm{R(\ep[],A) B} M_D \norm{CR(\ep[],A)} \\
&\leq\norm{R(\ep[],A)}  + M_D f_k(\ep[] ),
}
and thus
\eq{
\abs{\varphi-\varphi_k}^\ga \norm{R(\ep[],A+BC)}
&\leq\abs{\varphi-\varphi_k}^\ga\norm{R(\ep[],A)}  + M_D \abs{\varphi-\varphi_k}^\ga  f_k(\ep[] )\\
&\leq M_A + M_D M_k.
}
This concludes that~\eqref{eq:stabpertgrowthest} is satisfied. 
On the other hand, if $\abs{\varphi-\varphi_k}>\eps_A$ for all $k$, then a similar estimate yields
\ieq{
\norm{R(\ep[],A+BC)}
\leq M_A+M_D \norm{B} \norm{C} M_A^2.
}
This concludes the proof.  
\hfill\qed

\end{document}